\documentclass[11pt,reqno]{amsart}

\usepackage{float}
\usepackage{subcaption}
\usepackage{amsmath,amsthm,amssymb,comment,fullpage}
\usepackage{times}
\usepackage[T1]{fontenc}
\usepackage{mathrsfs}
\usepackage{latexsym}
\usepackage{graphicx}
\usepackage{epsfig}
\usepackage{amsmath,amsfonts,amsthm,amssymb,amscd}
\input amssym.def
\input amssym.tex
\usepackage{color}
\usepackage{hyperref}
\usepackage{url}
\usepackage{breakurl}
\usepackage{comment}
\newcommand{\bburl}[1]{\textcolor{blue}{\url{#1}}}

\newcommand{\prob}[1]{{\rm Prob}\left(#1\right)}

%\newcommand{\bburl}[1]{\textcolor{blue}{\url{#1}}}

%\usepackage{showkeys}
%\DeclareGraphicsRule{.tif}{png}{.png}{`convert #1 `dirname #1`/`basename #1 .tif`.png}

\numberwithin{equation}{section}

\newtheorem{thm}{Theorem}[section]

\newtheorem{lem}[thm]{Lemma}

\newtheorem{defi}[thm]{Definition}

\theoremstyle{plain}

\newtheorem{defn}[thm]{Definition}

\newtheorem{theorem}[thm]{Theorem}

%\newtheorem*{hyptheta}[thm]{Hypothesis \ref{montgomery original}$_\theta$}
%\newtheorem*{hyplog}{Hypothesis \ref{montgomer original}}
%\newtheorem*{hypsmallo}[thm]{Hypothesis \ref{montgomery original}}
%\newtheorem*{hypeta}[thm]{Hypothesis 1.10$_{\eta}$}
%\theoremstyle{definition}

%below allows duplication of theorems without renumbering
%\newenvironment{dup}[1]{\@begintheorem{#1}{\unskip}}{\@endtheorem}

\newcommand\be{\begin{equation}}
\newcommand\ee{\end{equation}}
\newcommand\bea{\begin{eqnarray}}
\newcommand\eea{\end{eqnarray}}
\newcommand\bi{\begin{itemize}}
\newcommand\ei{\end{itemize}}
\newcommand\ben{\begin{enumerate}}
\newcommand\een{\end{enumerate}}
\newcommand\bc{\begin{center}}
\newcommand\ec{\end{center}}
\newcommand\ba{\begin{array}}
\newcommand\ea{\end{array}}

 %use in linux
%\newcommand{\umess}[2]{\underset{(#1)}{\underbrace{#2}}}

%%Greek letters

%%Boldface letters
\newcommand{\R}{\ensuremath{\mathbb{R}}}

% Fractions
  %oneforth
  %onehalf
  %onethird
    %1/pi
    %2/pi
 %1/2pi

%Formatting
%\renewcommand{\baselinestretch}{1}

\newcommand{\hr}[1]{\href{#1}{\url{#1}}}

%\newcommand{\matfour}[16]
%{\left(\begin{array}{cccc}
%                        #1  & #2 & #3 & #4 \\
%                        #5  & #6 & #7 & #8 \\
%                        #9  & #10 & #11 & #12 \\
%                        #13 & #14 & #15 & #16
%                          \end{array}\right) }

%\newcommand{\matfive}[25]
%{\left(\begin{array}{ccccc}
%                        #1  & #2 & #3 & #4  & #5 \\
%                        #6 & #7 &  #8 & #9  & #10 \\
%                        #11 & #12 & #13 & #14 & #15 \\
%                        #16 & #17 & #18 & #19 & #20 \\
%                        #21 & #22 & #23 & #24 & #25
%                         \end{array}\right) }

\renewcommand{\Re}{\operatorname{Re}}

%%%%%%%%%%%%%%%%%%%%%%%%%%%%%%%%%%%%%%%%%%%%%%%%%%%%%%%%%%%%%%%%%%%%%%%%%%%
%%%%%%%%%%%%%%%%%%%%%%%%%%%%%%%%%%%%%%%%%%%%%%%%%%%%%%%%%%%%%%%%%%%%%%%%%%%

\title{The Inverse Gamma Distribution and Benford's Law}

\author{Rebecca F. Durst}
\email{\textcolor{blue}{\href{mailto:rfd1@williams.edu}{rfd1@williams.edu}}}
\address{Department of
  Mathematics and Statistics, Williams College, Williamstown, MA 01267}

\author{Chi Huynh}
\email{\textcolor{blue}{\href{mailto:huynhngocyenchi@gmail.com}{huynhngocyenchi@gmail.com}}}
\address{School of Mathematics, Georgia Institute of Technology, Atlanta, GA 30332}

\author{Adam Lott}
\email{\textcolor{blue}{\href{mailto:alott@u.rochester.edu}{alott@u.rochester.edu}}}
\address{Department of Mathematics, University of Rochester, Rochester, NY 14627}

\author{Steven J. Miller}
\email{\textcolor{blue}{\href{mailto:sjm1@williams.edu}{sjm1@williams.edu}}}
\address{Department of
  Mathematics and Statistics, Williams College, Williamstown, MA 01267}

\author{Eyvindur A. Palsson}
\email{\textcolor{blue}{\href{mailto:palsson@vt.edu} {palsson@vt.edu}}}
\address{Department of Mathematics, Virginia Tech, Blacksburg, VA 24061}

\author{Wouter Touw}
\email{\textcolor{blue}{\href{w.touw@cmbi.ru.nl}{w.touw@nki.nl}}}
\address{Department of Biochemistry, Netherlands Cancer Institute, Plesmanlaan 121, 1066 CX Amsterdam, the Netherlands}

\author{Gert Vriend}
\email{\textcolor{blue}{\href{Gerrit.Vriend@radboudumc.nl}{Gerrit.Vriend@radboudumc.nl}}}
\address{Radboud University Medical Centre, CMBI, Geert Grooteplein Zuid 26-28,  route 260
6525GA Nijmegen}

\thanks{This work was supported by NSF Grants DMS1265673, DMS1561945, and DMS1347804, Simons Foundation Grant \#360560, Williams College, and the Williams Finnerty Fund. We thank Peter Vijn for suggesting using Benford's law to study protein database submissions, and the referee for helpful comments on an earlier draft.}

\subjclass[2010]{60F05, 11K06 (primary), 60E10, 42A16, 62E15, 62P99 (secondary)}

\keywords{Benford's Law, Inverse Gamma Distribution, digit bias, Poisson Summation}

\date{\today}

\begin{document}

\begin{abstract}%Chi (01/20/17)
According to Benford's Law, many data sets have a bias towards lower leading digits (about $30\%$ are $1$'s). The applications of Benford's Law vary: from detecting tax, voter and image fraud to determining the possibility of match-fixing in competitive sports. There are many common distributions that exhibit such bias, i.e. they are almost Benford. %behavior, such as.
These include the exponential and the Weibull distributions. Motivated by these examples and the fact that the underlying distribution of factors in protein structure follows an inverse gamma distribution, we determine the closeness of this distribution to a Benford distribution as its parameters change.
\end{abstract}

\maketitle

\tableofcontents

%%%%%%%%%%%%%%%%%%%%%%%%%%%%%%%%%%%%%%%%%%%%%%%%%%%%%%%%%%%%%%%%%%%%%%%%%%%%%%%%%%%%%%%%%%%%%%%%%%%%%%%%%%%%%%%%%%%%%%%%
%%%%%%%%%%%%%%%%%%%%%%%%%%%%%%%%%%%%%%%%%%%%%%%%%%%%%%%%%%%%%%%%%%%%%%%%%%%%%%%%%%%%%%%%%%%%%%%%%%%%%%%%%%%%%%%%%%%%%%%%
%%%%%%%%%%%%%%%%%%%%%%%%%%%%%%%%%%%%%%%%%%%%%%%%%%%%%%%%%%%%%%%%%%%%%%%%%%%%%%%%%%%%%%%%%%%%%%%%%%%%%%%%%%%%%%%%%%%%%%%%
\section{Introduction}

%%%%%%%%%%%%%%%%%%%%%%%%%%%%%%%%%%%%%%%%%%%%%%%%%%%%%%%%%%%%%
%%%%%%%%%%%%%%%%%%%%%%%%%%%%%%%%%%%%%%%%%%%%%%%%%%%%%%%%%%%%%
%%%%%%%%%%%%%%%%%%%%%%%%%%%%%%%%%%%%%%%%%%%%%%%%%%%%%%%%%%%%%%

\subsection{Motivation}

%%%% Adam, 1/20

For a positive integer $B \geq 2$, any positive number $x$ can be written uniquely in base $B$ as $x = S_B(x) \cdot B^{k(x)}$ where  $k(x)$ is an integer and $S_B(x) \in [1,B)$ is called the \textit{significand} of $x$ base $B$.  Benford's Law describes the distribution of significands in many naturally occurring data sets and states that for any $1 \leq s < B$, the proportion of the set with significand at most $s$ is $\log_B(s)$.  In this paper, we examine the behavior of random variables, so we adopt the following definition.

%and states that for any $1 \leq s < B$, the proportion of the set with significand at most $s$ is $\log_B(s)$.

%%%% Adam, 1/20

\begin{defn}[Benford's Law]
Let $X$ be a random varialbe taking values in $(0,\infty)$ almost surely.  We say that $X$ follows Benford's Law in base $B$ if, for any $s \in [1,B)$,
\begin{equation}
\prob{S_B(X) \leq s} \ = \ \log_B(s).
\end{equation}
In particular,
\begin{equation}
\prob{\text{first digit of $X$ is $d$}} \ = \ \log_B \left( \frac{d+1}{d} \right).
\end{equation}
\end{defn}

Thus in base 10 about 30\% of numbers have a leading digit of 1, as compared to only about 4.6\% starting with a 9. For an introduction to the theory, as well as a detailed discussion of some of its applications in accounting, biology, economics, engineering, game theory, finance, mathematics, physics, psychology, statistics and voting see \cite{Mi}.

One of  the most  important applications of Benford's  law is  in fraud detection; it has successfully flagged voting irregularities, tax fraud, and embezzlement, to name just a few of its successes. The motivation for this work was to see if a Benford analysis could have detected some fraud on protein structures, as well as serve as a protection against future unscrupulous researchers.

Proteins are the workhorses in all of biology; in plant, human, animal, bacterium, and slime mold, alike. They keep us together, digest our food, make us see, hear, taste, feel, and think, they defend us against pathogens, and they are the target of most existing medicines. Knowledge about the three-dimensional structure of proteins is a prerequisite for research in fields as diverse as drug design, bio-fuel engineering, food processing, or increasing the yield in agriculture.

These three-dimensional structures can be solved with X-ray crystallography, Nuclear Magnetic Resonance, or electron microscopy. Today, most structures are solved with X-ray crystallography. When structures are solved with this technique the experimentalist does not only obtain X, Y and Z coordinates for the atoms, but also a measure of their mobility, which is called the B factor.

%Fraud is everywhere, and unfortunately also in science. A few years ago it was discovered (see \bburl{https://www.uab.edu/reporterarchive/71570-uab-statement-on-protein-data-bank-issues}) that H. K. M. Murthy had committed scientific fraud and that all but two of the structures he ever `solved' were the result of imagination rather than experimental data. This fraud was most obviously hinted at by the distributions of B-factors in Murthy's `structures' (\bburl{swift.cmbi.ru.nl/gv/Murthy/Murthy_4.html}), which triggered the idea to use Benford's standard fraud detection methods to analyze the distributions of B-factors in protein structures. There is reason to believe that many of these quantities should be given by inverse Gamma distributions, which suggests the main question of this paper: how close is the inverse Gamma distribution, for various choices of its parameters, to Benford's law? While unfortunately a Benford analysis did  not flag the fraudulent protein submissions over legitimate ones, the question of how close this special distribution is to Benford is still of independent interest, and we report on our findings below. This paper is a sequel to \cite{Weibull}, where a similar analysis was done for the three parameter Weibull.

After it was detected that 12 of the 14 structures deposited in the PDB protein data bank \cite{BHN} by H. K. M. Murthy were not based on
experimental data (see \bburl{https://www.uab.edu/reporterarchive/71570-uab-statement-on-protein-data-bank-issues}), two of the authors asked the question if their rather anomalous B-factor distributions could have been used to automatically detect the problems (see \bburl{swift.cmbi.ru.nl/gv/Murthy/Murthy_4.html}). In practice B-factor distributions are influenced by experiment conditions and human choices. For example, B factors may fit inverse Gamma distributions translated towards higher values \cite{DNMS, Neg}, or the inverse Gamma fit might be worse when upper and/or lower B factor limits are enforced by the experimentalist. The reported properties of each of the 14 structures were used to find in the PDB a legitimate protein structure of comparable experimental quality, deposition date, size, and  B factor profile. In general, inverse Gamma parameters could be estimated well for both the Murthy structures and the legitimate structures by maximum likelihood estimation when accounting for the translation along the $x$-axis. This suggests the main question of this paper: how close is the inverse Gamma distribution, for various choices of its parameters, to Benford's law? While unfortunately a Benford analysis did  not flag Murthy's structures from legitimate ones, the question of how close this special distribution is to Benford is still of independent interest, and we report on our findings below. This paper is a sequel to \cite{Weibull}, where a similar analysis was done for the three parameter Weibull.

%%%%%%%%%%%%%%%%%%%%%%%%%%%%%%%%%%%%%%%%%%%%%%%%%%%%%%%%%%%%%
%%%%%%%%%%%%%%%%%%%%%%%%%%%%%%%%%%%%%%%%%%%%%%%%%%%%%%%%%%%%%
%%%%%%%%%%%%%%%%%%%%%%%%%%%%%%%%%%%%%%%%%%%%%%%%%%%%%%%%%%%%%%
\subsection{Results}

In practice, it is easier to use the following equivalent condition for Benford behavior (see, for example, \cite{Diaconis} or \cite{Mi}), which we reprove here.
%In \cite{Weibull}, an equivalent condition for Benford behavior is proven, and we redefine necessary terms and reprove the result here.

\begin{defn}
We say that a random variable $Y$ taking values in $[0,1]$ is equidistributed if, for any $[a,b] \subseteq [0,1]$,
\begin{equation}
\prob{Y \in [a,b]} \ = \ b-a.
\end{equation}
\end{defn}

\begin{theorem}\label{thm:logequi}
A random variable $X$ follows Benford's Law in base $B$ if and only if the random variable $Y := \log_B X \!\mod 1 $ is equidistributed.
\end{theorem}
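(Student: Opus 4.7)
The plan is to reduce the two conditions to the same statement by making the substitution $s = B^t$ (equivalently $t = \log_B s$), which bijects $[1,B)$ with $[0,1)$. The heart of the matter is the observation that the fractional part of $\log_B X$ equals $\log_B S_B(X)$, so the event concerning the significand $S_B(X)$ and the event concerning $Y$ are literally the same event.

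First I would write $X = S_B(X)\cdot B^{k(X)}$ with $S_B(X)\in[1,B)$ and $k(X)\in\Z$, and take $\log_B$ to obtain $\log_B X = \log_B S_B(X) + k(X)$. Since $\log_B S_B(X)\in[0,1)$ and $k(X)$ is an integer, this is exactly the decomposition into fractional and integer parts, so
\begin{equation}
Y \ =\ \log_B X \bmod 1 \ =\ \log_B S_B(X).
\end{equation}
Consequently, for any $s\in[1,B)$, the event $\{S_B(X)\le s\}$ coincides with $\{Y\le \log_B s\}$, and $\log_B s$ ranges over $[0,1)$ as $s$ ranges over $[1,B)$.

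For the forward direction, assume $X$ is Benford in base $B$. Then for every $t\in[0,1)$, setting $s=B^t$,
\begin{equation}
\prob{Y\le t} \ =\ \prob{S_B(X)\le B^t} \ =\ \log_B(B^t) \ =\ t.
\end{equation}
Together with $\prob{Y\in[0,1]}=1$ (which holds a.s.\ because $X>0$ a.s.), this shows that $Y$ has the uniform CDF on $[0,1]$, and hence $\prob{Y\in[a,b]}=b-a$ for every $[a,b]\subseteq[0,1]$, i.e.\ $Y$ is equidistributed. For the reverse direction, assume $Y$ is equidistributed; then for any $s\in[1,B)$,
\begin{equation}
\prob{S_B(X)\le s} \ =\ \prob{Y\le \log_B s} \ =\ \prob{Y\in[0,\log_B s]} \ =\ \log_B s,
\end{equation}
which is Benford's Law in base $B$.

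There is no real obstacle; the only point requiring a sentence of care is verifying that $Y=\log_B S_B(X)$ as a random variable (so that the two probability statements really do match), and handling the half-open intervals cleanly so that the equidistribution hypothesis, which is stated for closed subintervals of $[0,1]$, yields the required statement about $\prob{Y\le t}$ for $t\in[0,1)$ and conversely.
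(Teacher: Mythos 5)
Your proof is correct and uses essentially the same argument as the paper: identify $Y=\log_B X \bmod 1$ with $\log_B S_B(X)$ and translate the significand event into the equidistribution event via $s=B^t$. The only difference is that you also supply the forward direction, which the paper omits (it proves only the reverse implication, the one needed for its main result, and cites Diaconis for the rest), and your handling of that direction is fine.
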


\begin{proof}We only prove the reverse direction here as that is all we need to prove our main result.  Full details are given in \cite{Diaconis}.  Suppose $Y := \log_B X \!\mod 1$ is equidistributed.  First note that
%Chi removed the {} before and after the Y (01/20)
\begin{align}
Y &\ = \ \log_B(X) \! \mod 1 \nonumber\\
&\ = \ \log_B(S_B(X) \cdot B^{k(X)} ) \!\mod 1 \nonumber\\
&\ = \ \log_B(S_B(X)) + \log_B(B^{k(X)}) \!\mod 1 \nonumber\\
&\ = \ \log_B(S_B(X)).
\end{align}
Then, taking $a=0$, $b=\log_B(p)$ in the definition of equidistribution, we get
%\begin{equation}
%\prob{Y = \log_B(X) \in [0, \log_B(p)]} \ = \ \log_B(p).
%\end{equation}
\begin{equation}
\prob{\log_B(S_B(X)) \in [0, \log_B(p)]} \ = \ \log_B(p).
\end{equation}
Exponentiating gives
\begin{equation}
\prob{S_B(X) \in [1,p]} \ = \ \log_B(p),
\end{equation}
which is exactly the statement of Benford's Law.
\end{proof}

%%%% Adam, 1/20

In this paper, we examine the behavior of a random variable drawn from the inverse gamma distribution.  For fixed parameters $\alpha$, $\beta > 0$, this distribution has density defined by
\begin{equation}\label{eq:invgamPDF}
f(x;\alpha,\beta) \ = \ \frac{\beta^{\alpha}}{\Gamma(\alpha)} x^{-\alpha - 1} \exp\left(\frac{-\beta}{x}\right)
\end{equation}
and cumulative distribution function
%\begin{equation} \label{eq: invgamCDF}
%F(x;\alpha,\beta) \ = \ \frac{\Gamma\left(\alpha,\frac{\beta}{x}\right)}{\Gamma(\alpha)}
%\end{equation}
\begin{equation} \label{eq: invgamCDF}
F(x;\alpha,\beta) \ = \ \frac{1}{\Gamma(\alpha)} \int_{\beta/x}^{\infty} t^{\alpha-1}e^{-t}
\, dt\end{equation}
%where $\Gamma(\cdot, \cdot)$ is the upper incomplete gamma function.
Let $X_{\alpha, \beta}$ be a random variable distributed according to (\ref{eq:invgamPDF}) and let $F_B$ be the cumulative distribution function of $\log_B(X_{\alpha, \beta})\! \mod 1$.  By Theorem \ref{thm:logequi}, the assertion that $X_{\alpha, \beta}$ follows Benford's Law is equivalent to saying that $F_B(z) = z$ for all $z \in [0,1]$.  In this paper, we investigate when the deviations of $F_B(z)$ from $z$ are small, i.e., when $X_{\alpha, \beta}$ approximately follows Benford's Law.  We do this by deriving a series expansion for $F_B'(z)$ of the form $1 + (\text{error term})$, where the error term can be computed to great accuracy, and then integrating in order to return to the cumulative distribution function, $F_B(z)$.

%%% Adam, 1/20

In Section \ref{sec: SeriesRep}, we derive our series representation for $F_B'(z)$.  In Section \ref{sec: TailBound}, we give bounds for the tail of the series, showing that the series can be computed to great accuracy by computing only the first few terms. This result is built upon in Appendix \ref{app: bound1}.  In Section \ref{sec: Plots}, we use this result to generate some plots illustrating the Benfordness of the inverse gamma distribution as a function of $\alpha$ and $\beta$.

%%%%%%%%%%%%%%%%%%%%%%%%%%%%%%%%%%%%%%%%%%%%%%%%%%%%%%%%%%%%%%%%%%%%%%%%%%%%%%%%%%%%%%%%%%%%%%%%%%%%%%%%%%%%%%%%%%%%%%%%
%%%%%%%%%%%%%%%%%%%%%%%%%%%%%%%%%%%%%%%%%%%%%%%%%%%%%%%%%%%%%%%%%%%%%%%%%%%%%%%%%%%%%%%%%%%%%%%%%%%%%%%%%%%%%%%%%%%%%%%%
%%%%%%%%%%%%%%%%%%%%%%%%%%%%%%%%%%%%%%%%%%%%%%%%%%%%%%%%%%%%%%%%%%%%%%%%%%%%%%%%%%%%%%%%%%%%%%%%%%%%%%%%%%%%%%%%%%%%%%%%
%\section{$\beta$ invariance}
%\label{sec: BetaInvariance}
\section{Series representation for $F_B'(z)$}
\label{sec: SeriesRep} %Chi merged the variance with the series rep section (01/20/17)
Before beginning the analysis, we first note a useful invariant property of the Benfordness of this distribution.
\begin{lem} \label{prop: BetaInvariance}
For any $\alpha, \beta > 0$ and $z \in [0,1]$,
\begin{equation} \label{eq: BetaInvariance}
\prob{\log_B S_B(X_{\alpha,\beta}) \leq z} \ = \ \prob{\log_B S_B(X_{\alpha,B \cdot \beta}) \leq z}.
\end{equation}
In other words, the deviation from Benford's law of the inverse Gamma distribution doesn't change if we scale $\beta$ by a %multiple
factor of $B$.
\end{lem}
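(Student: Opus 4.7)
The plan is to reduce the lemma to two simple facts: (i) scaling an inverse gamma random variable by $B$ yields another inverse gamma with $\beta$ rescaled by $B$, and (ii) the base-$B$ significand is invariant under multiplication by $B$.

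First I would perform a change of variables. Let $X = X_{\alpha,\beta}$ have density $f(x;\alpha,\beta) = \frac{\beta^\alpha}{\Gamma(\alpha)} x^{-\alpha-1} \exp(-\beta/x)$, and set $Y := B \cdot X$. A routine Jacobian computation gives
\begin{equation}
f_Y(y) \ = \ \tfrac{1}{B}\, f(y/B;\alpha,\beta) \ = \ \frac{(B\beta)^\alpha}{\Gamma(\alpha)}\, y^{-\alpha-1} \exp\!\left(\frac{-B\beta}{y}\right),
\end{equation}
which is precisely the inverse gamma density with parameters $(\alpha, B\beta)$. Thus $B \cdot X_{\alpha,\beta}$ and $X_{\alpha,B\beta}$ are equal in distribution.

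Next I would observe that for any $x > 0$, writing $x = S_B(x) \cdot B^{k(x)}$ gives $Bx = S_B(x) \cdot B^{k(x)+1}$, so $S_B(Bx) = S_B(x)$; equivalently, $\log_B(Bx) \equiv \log_B(x) \pmod 1$. Therefore
\begin{equation}
\log_B S_B(X_{\alpha,B\beta}) \ \stackrel{d}{=}\ \log_B S_B(B \cdot X_{\alpha,\beta}) \ = \ \log_B S_B(X_{\alpha,\beta}),
\end{equation}
and taking $\prob{\,\cdot\, \leq z}$ on both sides yields \eqref{eq: BetaInvariance}.

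There is no real obstacle here; the whole content of the lemma is that rescaling $\beta$ by $B$ corresponds to rescaling the random variable itself by $B$, which is precisely the transformation under which the base-$B$ significand (and hence Benford behavior) is invariant. The only thing to be careful about is the Jacobian in the density transformation, but this is a one-line check.
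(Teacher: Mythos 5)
Your proposal is correct, and it takes a genuinely different (and arguably cleaner) route than the paper. The paper works directly with the cumulative distribution function: it writes $\prob{\log_B S_B(X_{\alpha,B\beta}) \leq z}$ as the doubly infinite sum $\frac{1}{\Gamma(\alpha)}\sum_{k}\int_{B\beta/B^{z+k}}^{B\beta/B^{k}} t^{\alpha-1}e^{-t}\,dt$, observes that the limits $B\beta/B^{z+k}$ and $B\beta/B^{k}$ equal $\beta/B^{z+k-1}$ and $\beta/B^{k-1}$, and concludes by reindexing $k \mapsto k-1$ in the infinite sum. You instead factor the lemma into two structural facts: the scaling identity $B\cdot X_{\alpha,\beta} \eqd X_{\alpha,B\beta}$ (verified by the Jacobian computation, which you carried out correctly), and the elementary invariance $S_B(Bx)=S_B(x)$, i.e.\ $\log_B(Bx)\equiv\log_B(x)\pmod 1$. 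These are really two faces of the same phenomenon---the paper's index shift \emph{is} the significand invariance in disguise---but your formulation makes the mechanism transparent and more general: it shows that the Benford deviation is unchanged whenever scaling the parameter corresponds to scaling the random variable by a power of $B$, with no special role played by the incomplete gamma integrals. What the paper's computational route buys is that the intermediate identity $F_B(z)=\frac{1}{\Gamma(\alpha)}\sum_k\int_{\beta/B^{z+k}}^{\beta/B^{k}}t^{\alpha-1}e^{-t}\,dt$ is established along the way and is reused verbatim at the start of the proof of Theorem \ref{thm: series}, so the manipulation is not wasted effort in the larger structure of the paper; your argument, if adopted, would require that derivation to be done separately there. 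No gaps in your reasoning.
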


\begin{proof}
Scaling $\beta$ by a %multiple
factor of B yields
\begin{align}
 \nonumber \prob{\log_B S_B(X_{\alpha,B \cdot \beta}) \leq z} &\ =  \ \sum_{k = -\infty}^{\infty} \prob{\log_{B}X_{\alpha,B \cdot \beta} \in [k,z+k]}  \\
 &\ = \ \sum_{k = -\infty}^{\infty} \prob{X_{\alpha,B \cdot \beta}\in [B^{k},B^{z+k}]},
\end{align}

which, by (\ref{eq: invgamCDF}), is
\begin{align}\label{lem: BenfordDev}
 \nonumber &\ = \ \frac{1}{\Gamma(\alpha)} \sum_{k = -\infty}^{\infty} \left( \int_{ B \cdot \beta / B^{z+k}}^{\infty} t^{\alpha - 1} e^{-t} dt - \int_{ B \cdot \beta / B^{k}}^{\infty }t^{\alpha - 1} e^{-t} dt \right) \\
\nonumber &\ = \ \frac{1}{\Gamma(\alpha)} \sum_{k = -\infty}^{\infty} \int_{ B \cdot \beta / B^{z+k}}^{ B \cdot \beta / B^{k}} t^{\alpha - 1} e^{-t} dt \\
 \nonumber &\ = \ \frac{1}{\Gamma(\alpha)} \sum_{k = -\infty}^{\infty} \int_{ \beta / B^{z+k-1}}^{ \beta / B^{k-1}} t^{\alpha - 1} e^{-t} dt \\
 &\ = \ \prob{S_B(X_{\alpha,\beta}) \leq z}.
\end{align}

Thus, scaling $\beta$ by a power of $B$ only results in shifting $k$. Since we take an infinite sum over $k$, this shift does not change the final value of the probability. As a consequence of this, it is clear that scaling $\beta$ by any power of $B$ will yield the same result, shifting $k$ by that power.
\end{proof}
\noindent Thus it suffices to study $1 \leq \beta < B$.

%%%%%%%%%%%%%%%%%%%%%%%%%%%%%%%%%%%%%%%%%%%%%%%%%%%%%%%%%%%%%%%%%%%%%%%%%%%%%%%%%%%%%%%%%%%%%%%%%%%%%%%%%%%%%%%%%%%%%%%%
%%%%%%%%%%%%%%%%%%%%%%%%%%%%%%%%%%%%%%%%%%%%%%%%%%%%%%%%%%%%%%%%%%%%%%%%%%%%%%%%%%%%%%%%%%%%%%%%%%%%%%%%%%%%%%%%%%%%%%%%
%%%%%%%%%%%%%%%%%%%%%%%%%%%%%%%%%%%%%%%%%%%%%%%%%%%%%%%%%%%%%%%%%%%%%%%%%%%%%%%%%%%%%%%%%%%%%%%%%%%%%%%%%%%%%%%%%%%%%%%%

To show that the deviations of $F_B(z)$ from $z$ are small, it is easier in practice to show that $F_B'(z)$ is close to 1, and then integrate. We derive a series representation for $F_B'(z)$, but first, we state a useful property of Fourier transforms (see, for example, \cite{Stein}).

Throughout the course of this paper, we define the Fourier transform as follows.
\begin{defi}[Fourier Transform]
Let $f \in L^1(\R)$.  Define the Fourier transform $\hat{f}$ of $f$ by
\begin{equation}\label{eq: fourier}
\hat{f}(\xi) \ := \ \int_{-\infty}^{\infty}f(x)e^{-2\pi i x \xi}dx.
\end{equation}
\end{defi}
Furthermore, we will occasionally use the notation
\begin{equation}\label{eq: altfourier}
\mathcal{F}(f(x))(\xi) \ := \ \hat{f}(\xi).
\end{equation}

%%% Adam, 1/20

Our main tool is the Poisson summation formula, which we state here in a weak form (see Theorem 3.1 of \cite{Weibull} for a more detailed explanation).
\begin{theorem}[Poisson Summation]
Let $f$ be a function such that $f$, $f'$, and $f''$ are all $O(x^{-(1+\eta)})$  as $x \to \infty$ for some $\eta > 0$.  Then
\begin{equation} \label{poisson-sum}
\sum_{k=-\infty}^{\infty} f(k) \ = \ \sum_{k=-\infty}^{\infty} \hat{f}(k).
\end{equation}
\end{theorem}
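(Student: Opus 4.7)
The plan is to follow the classical periodization argument. First, I would introduce the auxiliary function
\[
F(x) \ := \ \sum_{k=-\infty}^{\infty} f(x+k),
\]
and verify that the hypothesis $f(x) = O(x^{-(1+\eta)})$ as $|x|\to\infty$ makes this series absolutely and uniformly convergent on every compact subset of $\R$, so $F$ is continuous and $1$-periodic. The same decay bound applied to $f'$ and $f''$ shows that the termwise-differentiated series for $F'$ and $F''$ also converge uniformly, so $F$ is in fact $C^2$ on $\R$.

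Next, I would compute the Fourier coefficients of $F$ on the circle. For each $n \in \Z$,
\[
c_n \ = \ \int_0^1 F(x)\, e^{-2\pi i n x}\, dx \ = \ \sum_{k=-\infty}^{\infty} \int_0^1 f(x+k)\, e^{-2\pi i n x}\, dx,
\]
where the interchange of sum and integral is justified by uniform convergence. Making the change of variables $u = x+k$ and using $e^{-2\pi i n k} = 1$, the right-hand side telescopes to the single integral $\int_{-\infty}^{\infty} f(u)\, e^{-2\pi i n u}\, du = \hat{f}(n)$.

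The third step is to apply Fourier inversion on the circle: since $F$ is $C^2$, its Fourier series converges absolutely and pointwise to $F$. The key quantitative input here is that integrating by parts twice gives
\[
\hat{f}(n) \ = \ \frac{-1}{4\pi^2 n^2}\, \widehat{f''}(n) \quad (n \neq 0),
\]
and boundary terms vanish by the decay of $f$ and $f'$, so $\hat{f}(n) = O(n^{-2})$; this makes $\sum_n \hat{f}(n) e^{2\pi i n x}$ absolutely convergent and equal to $F(x)$ for every $x$. Evaluating the identity $F(x) = \sum_n \hat{f}(n) e^{2\pi i n x}$ at $x=0$ yields
\[
\sum_{k=-\infty}^{\infty} f(k) \ = \ F(0) \ = \ \sum_{n=-\infty}^{\infty} \hat{f}(n),
\]
which is exactly (\ref{poisson-sum}).

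The step I expect to be the main technical obstacle is establishing pointwise convergence of the Fourier series of $F$ at $x=0$; this is precisely where the hypothesis on $f''$ is used, and the $O(x^{-(1+\eta)})$ decay of $f$ and $f'$ is needed to kill the boundary terms in the repeated integration by parts that produces the $O(n^{-2})$ estimate for $\hat{f}(n)$. Everything else is routine manipulation of absolutely convergent sums and integrals.
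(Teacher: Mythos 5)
Your proof is correct, and it is the classical periodization argument. Note, though, that the paper itself does not prove this theorem at all: it states Poisson summation in a weak form and simply cites Theorem 3.1 of \cite{Weibull} (and standard references such as \cite{Stein}) for the justification, so you have supplied the proof that the paper delegates to the literature. Your route --- periodize to $F(x)=\sum_k f(x+k)$, identify the Fourier coefficients of $F$ with $\hat f(n)$ by unfolding, and force pointwise (indeed absolute and uniform) convergence of the Fourier series via the two-fold integration by parts giving $\hat f(n) = -\widehat{f''}(n)/(4\pi^2 n^2) = O(n^{-2})$ --- is exactly the standard argument in those references, and it uses the hypotheses in the intended way: the decay of $f$ gives uniform convergence of the periodization, the decay of $f$ and $f'$ kills the boundary terms, and $f''$ supplies the $O(n^{-2})$ bound. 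Two small points worth making explicit if you write this up: the hypothesis ``$O(x^{-(1+\eta)})$ as $x\to\infty$'' must be read two-sidedly (as $|x|\to\infty$), since you sum over all $k\in\Z$ and need the boundary terms at $-\infty$ as well; and to conclude $\widehat{f''}(n)=O(1)$ you need $f''\in L^1(\R)$, which follows from the stated decay together with the implicit local integrability (e.g.\ continuity) of $f''$ that this weak formulation takes for granted.
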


%%%%% Adam, 1/20

\begin{theorem}\label{thm: series}
Let $\alpha$, $\beta > 0$ be fixed and let $B \geq 3$ be an integer.  Let $X_{\alpha,\beta}$ be a random variable distributed according to equation (\ref{eq:invgamPDF}). For $z\in [0,1]$, let $F_{B}(z)$ be the cumulative distribution function of $\log_{B}(X_{\alpha,\beta}) \!\mod 1$.  Then $F_{B}'(z)$ is given by
\begin{equation} \label{eq: series}
F_{B}'(z) \ = \ 1 + \frac{2}{\Gamma(\alpha)} \sum_{k=1}^{\infty} \Re \left( e^{2\pi ik(\log_{B}{\beta-z})} \Gamma \left( \alpha-\frac{2\pi ik}{\log{B}} \right) \right).
\end{equation}

\end{theorem}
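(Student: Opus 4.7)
The plan is to write $F_B'(z)$ as a sum of translates of a single function, apply Poisson summation, and identify each Fourier coefficient with a value of the Gamma function.

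First I would split $F_B$ by the integer part of $\log_B X_{\alpha,\beta}$ and use (\ref{eq: invgamCDF}):
\begin{equation*}
F_B(z) \ = \ \sum_{k=-\infty}^{\infty}\prob{\log_B X_{\alpha,\beta}\in[k,k+z]} \ = \ \frac{1}{\Gamma(\alpha)}\sum_{k=-\infty}^{\infty}\int_{\beta B^{-(k+z)}}^{\beta B^{-k}} t^{\alpha-1}e^{-t}\,dt.
\end{equation*}
Differentiating in $z$ term by term (legitimate because the $k$-th summand is double-exponentially small in $|k|$) gives
\begin{equation*}
F_B'(z) \ = \ \sum_{k=-\infty}^{\infty} g(k+z), \qquad g(u) \ := \ \frac{\beta^{\alpha}\log B}{\Gamma(\alpha)}\, B^{-\alpha u}\exp\!\left(-\beta B^{-u}\right),
\end{equation*}
where $\log B$ denotes the natural logarithm.

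Next I would verify the hypotheses of Poisson summation for the translate $u\mapsto g(u+z)$. As $u\to+\infty$ the factor $B^{-\alpha u}$ decays exponentially, while as $u\to-\infty$ the double-exponential $\exp(-\beta B^{-u})$ crushes the growth from $B^{-\alpha u}$; direct differentiation shows $g'$ and $g''$ enjoy the same decay, far better than $O(|u|^{-(1+\eta)})$. Poisson summation then gives
\begin{equation*}
F_B'(z) \ = \ \sum_{k=-\infty}^{\infty} e^{2\pi i k z}\,\hat g(k).
\end{equation*}

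The heart of the argument is evaluating $\hat g(k)$. I would make the substitution $t=\beta B^{-u}$, under which $B^{-\alpha u}=(t/\beta)^{\alpha}$, $du=-dt/(t\log B)$, and $e^{-2\pi i k u}=e^{-2\pi i k\log_B\beta}\,t^{2\pi i k/\log B}$. The Fourier integral then collapses to a Gamma integral:
\begin{equation*}
\hat g(k) \ = \ \frac{e^{-2\pi i k\log_B\beta}}{\Gamma(\alpha)}\int_0^{\infty} t^{\alpha+2\pi i k/\log B - 1}e^{-t}\,dt \ = \ \frac{e^{-2\pi i k\log_B\beta}}{\Gamma(\alpha)}\Gamma\!\left(\alpha+\frac{2\pi i k}{\log B}\right).
\end{equation*}
The $k=0$ contribution is exactly $1$. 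For each $k\geq 1$ I would pair the $k$ and $-k$ Fourier terms; since $\alpha$ is real, $\Gamma(\overline w)=\overline{\Gamma(w)}$, so the two are complex conjugates, and their sum equals $\frac{2}{\Gamma(\alpha)}\Re\!\left(e^{2\pi i k(\log_B\beta - z)}\Gamma\!\left(\alpha-\frac{2\pi i k}{\log B}\right)\right)$, reproducing (\ref{eq: series}).

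The main obstacle is the routine but careful verification of the Poisson-summation decay hypotheses and the justification of term-by-term differentiation; both reduce to estimates on the double-exponential tails of $g$ and its derivatives. Everything else is bookkeeping around the Mellin-type substitution and the conjugate pairing.
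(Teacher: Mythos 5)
Your proposal is correct and follows essentially the same route as the paper: differentiate the series for $F_B(z)$ term by term, apply Poisson summation, evaluate the resulting Fourier integral via the substitution $t=\beta B^{-u}$ to produce $\Gamma\left(\alpha+\frac{2\pi i k}{\log B}\right)$, and pair the $\pm k$ terms using $\overline{\Gamma(a+ib)}=\Gamma(a-ib)$. The only cosmetic differences are that you phrase Poisson summation for the translates $g(k+z)$ via the modulation property rather than transforming the $z$-shifted function directly, and you justify termwise differentiation by the decay of the differentiated terms (which decay exponentially as $k\to+\infty$, double-exponentially only as $k\to-\infty$) where the paper invokes uniform convergence via the Weierstrass $M$-test.
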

\begin{proof}
By the argument leading to (\ref{lem: BenfordDev}),
\begin{equation}
F_B(z) \ = \ \frac{1}{\Gamma(\alpha)} \sum_{k = -\infty}^{\infty} \int_{\frac{\beta}{B^{z+k}}}^{\frac{\beta}{B^{k}}} t^{\alpha - 1} e^{-t} dt.
\end{equation}
\begin{comment}In order to prove this theorem, we employ a process similar to that found in \cite{Weibull}. Rather than studying the distribution of $X_{\alpha ,\beta}$, we instead choose to study the distribution of $\log_{B}X_{\alpha ,\beta}$. Indeed, $$\prob{\log_{B}X_{\alpha,\beta}\mod 1 \in [a,b]}$$ $$=\sum_{k=-\infty}^{\infty}\prob{\log_{B}X_{\alpha,\beta}\mod 1 \in [a+k,b+k]}  =\prob{X_{\alpha,\beta}\in [B^{a+k},B^{b+k}]}$$  $$=\sum_{k=-\infty}^{\infty}(\frac{\int_{\frac{\beta}{B^{b+k}}}^{\infty}t^{\alpha-1}e^{-t}dt-\int_{\frac{\beta}{B^{a+k}}}^{\infty}t^{\alpha-1}e^{-t}dt}{\int_{0}^{\infty}e^{-t}t^{\alpha-1}})=\frac{1}{\Gamma(\alpha)}\sum_{k=-\infty}^{\infty}\int_{\frac{\beta}{B^{b+k}}}^{\frac{\beta}{B^{a+k}}}t^{\alpha-1}e^{-t}dt=F_{B}(x)$$
We now allow $a=0$ and $b=z$ for $z\in [0,1]$ since, as explained in \cite{Weibull}, the probability for any integral of the form $[a,b]$ may be determined by taking the difference of the probabilities for the intervals $[0,a]$ and $[0,b]$. Thus we have: $$F_{B}(z)=\frac{-1}{\Gamma(\alpha)}\sum_{k=-\infty}^{\infty}\int_{\frac{\beta}{B^{k}}}^{\frac{\beta}{B^{z+k}}}t^{\alpha-1}e^{-t}dt.$$
Since our goal is to show that this probability distribution approaches the uniform distribution, it is enough to show that $F_{B}'(z)$ approaches 1.
\end{comment}
We want to show that this series converges uniformly for $z \in [0,1]$.  Let
\begin{equation}
g_0(z) \ := \ \int_{\frac{\beta}{B^{z}}}^{\beta} t^{\alpha - 1} e^{-t} dt
\end{equation}
and for $k \geq 1$,
\begin{equation}
g_k(z) \ := \ \int_{\frac{\beta}{B^{z+k}}}^{\frac{\beta}{B^{k}}} t^{\alpha - 1} e^{-t} dt + \int_{\frac{\beta}{B^{z-k}}}^{\frac{\beta}{B^{-k}}} t^{\alpha - 1} e^{-t} dt.
\end{equation}
Notice that each $g_k$ is monotonically increasing in $z$ and positive for $z \in [0,1]$.  So we have $g_k(z) \leq g_k(1)$ for $z \in [0,1]$ and
\begin{equation}
\sum_{k = 0}^{\infty} g_k(1) \ = \ F_B(1) \ = \ 1.
\end{equation}
Thus the Weierstrass $M$-test implies that
\begin{equation}
\sum_{k = 0}^{\infty} g_k(z) \ = \ \sum_{k = -\infty}^{\infty} \int_{\frac{\beta}{B^{z+k}}}^{\frac{\beta}{B^{k}}} t^{\alpha - 1} e^{-t} dt
\end{equation}
converges uniformly for $z \in [0,1]$.

\begin{comment}
\textcolor{red}{This is Chi. I think I have something for this part of the paper, which is the last thing we need to do. We want to show the sum converges uniformly so we could integrate termwise. So let's say we split the sum into a negative k part and a positive k part. In the negative k part, the intervals over which we are doing integration are getting big and running off to infinity. But the integrand is getting small as $k \to -\infty$. Perhaps you could bound the integrand in $k$ and then bound the integral by the product of the size of the interval and the supremum of the integrand. \\
In the positive sum, the intervals over which we are integrating are getting smaller, going to zero. So we need to consider the case of $\alpha < 1$ and $\alpha \geq 1$ separately.When $\alpha \geq 1$, look at the biggest possible interval which occurs when $k = 0$ (this will be the interval $[\beta/B^z, \beta]$. All the other integrals will be bounded by the product of the corresponding interval length and supremum over the interval $[0, \beta]$ of the integrand. This quantity is going to zero very fast. When $\alpha < 1$, the integrand is no longer bounded but $\alpha - 1 < 1$ so the singularity is integrable (you can try improper integration to convince yourself). From here, because $t$ is positive, $e^{-t}$ is less than $1$ so you can get an inequality by trashing the $e^{-t}$ and just compute the integral of $t^{\alpha - 1}$. I think this should decay very fast also.}
\end{comment}

Since the convergence is uniform, we can differentiate term by term to obtain
\begin{equation} \label{eq: derivative}
F_{B}'(z) \ = \ \frac{1}{\Gamma(\alpha)} \sum_{k=-\infty}^{\infty} \left( \frac{\beta}{B^{z+k}} \right)^{\alpha} \exp \left(\frac{-\beta}{B^{z+k}} \right) \log{B}.
\end{equation}
Applying Poisson summation to (\ref{eq: derivative}) gives
\begin{equation}
F_{B}'(z) \ = \ \frac{1}{\Gamma(\alpha)} \sum_{k=-\infty}^{\infty} \int_{-\infty}^{\infty} \left( \frac{\beta}{B^{z+t}} \right)^{\alpha} \exp \left(\frac{-\beta}{B^{z+t}} \right) \log{B} \ \exp (-2\pi itk) \ dt.
\end{equation}
We now let $x=\frac{\beta}{B^{z+t}}$ and $dx=\frac{-\beta}{B^{z+t}}\log{B}\ dt$ so that we have
\begin{align}
F_{B}'(z)\nonumber &\ = \frac{1}{\Gamma(\alpha)} \ \sum_{k=-\infty}^{\infty} \int_{0}^{\infty} x^{\alpha-1} \exp \left( -2\pi ik \left( \frac{\log{\frac{\beta}{B^{z}x}}}{\log{B}} \right) \right) e^{-x} dx \\
\nonumber &\ = \ \frac{1}{\Gamma(\alpha)} \sum_{k=-\infty}^{\infty} \int_{0}^{\infty} x^{\alpha-1}\left( \frac{\beta}{B^{z}x} \right)^{\frac{-2\pi ik}{\log{B}}} e^{-x} dx \\
\nonumber &\ = \ \frac{1}{\Gamma(\alpha)} \sum_{k=-\infty}^{\infty} \left( \frac{\beta}{B^{z}} \right)^{\frac{-2\pi ik}{\log{B}}} \int_{0}^{\infty} x^{\alpha-1+\frac{2\pi ik}{\log{B}}} e^{-x} dx \\
 &\ = \ \frac{1}{\Gamma(\alpha)} \sum_{k=-\infty}^{\infty} \left( \frac{\beta}{B^{z}} \right)^{\frac{-2\pi ik}{\log{B}}} \Gamma \left( \alpha+\frac{2\pi ik}{\log{B}} \right).
\end{align}

Note that $\left( \frac{\beta}{B^{z}} \right)^{2\pi i\theta} = \exp \left( 2\pi i \theta \log{\frac{\beta}{B^{z}}} \right)$, so our sum becomes

\begin{equation} \label{eq: HalfSimpDeriv}
F_B'(z) \ = \ \frac{1}{\Gamma(\alpha)} \sum_{k=-\infty}^{\infty} \exp \left( \frac{-2\pi ik\log{\frac{\beta}{B^{z}}}}{\log{B}} \right) \Gamma \left( \alpha+\frac{2\pi ik}{\log{B}} \right).
\end{equation}
This form of our sum will become useful in a later proof, but for the purposes of this theorem, we further simplify our derivative and point out that the $k=0$ term in \eqref{eq: HalfSimpDeriv} is equal to 1. Thus our equation becomes
\begin{align}
\nonumber F_{B}'(z) \ = \ 1 + \frac{1}{\Gamma(\alpha)} &\Bigg[ \sum_{k=1}^{\infty} \exp \left( \frac{2\pi ik\log{\frac{\beta}{B^{z}}}}{\log{B}} \right) \Gamma \left( \alpha-\frac{2\pi ik}{\log{B}} \right) \\
\nonumber &+ \sum_{k=1}^{\infty} \exp \left( \frac{-2\pi ik\log{\frac{\beta}{B^{z}}}}{\log{B}} \right) \Gamma \left( \alpha+\frac{2\pi ik}{\log{B}} \right) \Bigg] \\
\nonumber \ = \ 1 + \frac{1}{\Gamma(\alpha)} & \Bigg[ \sum_{k=1}^{\infty} \exp \left( 2\pi ik(\log_{B}{\beta-z}) \right) \Gamma \left( \alpha-\frac{2\pi ik}{\log{B}} \right) \\
 &+ \exp \left( -2\pi ik(\log_{B}{\beta-z}) \right) \Gamma \left( \alpha+\frac{2\pi ik}{\log{B}} \right) \Bigg].
\end{align}

%%%% Adam, 1/20

Finally, using the identity that $\overline{\Gamma(a+ib)}=\Gamma(a-ib)$ for real numbers $a$ and $b$, we have
\begin{equation} \label{eq: SimpDeriv}
F_{B}'(z) \ = \ 1 + \frac{2}{\Gamma(\alpha)} \sum_{k=1}^{\infty}\Re \left( e^{2\pi ik(\log_{B}{\beta-z})} \Gamma \left( \alpha-\frac{2\pi ik}{\log{B}} \right) \right).
\end{equation}

\end{proof}

\section{Bounding the truncation error}
\label{sec: TailBound}

%%%%% Adam, 1/20

A key tool for the analysis in \cite{Weibull} is the identity
\begin{equation}
| \Gamma(1+ix) |^2 \ = \ \frac{\pi x}{\sinh(\pi x)}
\label{eq: GammaIdentity}
\end{equation}
for real $x$.  Examining \eqref{eq: SimpDeriv}, it is clear that when $\alpha = 1$, our analysis of the truncation error is similar to that of \cite{Weibull}.  Since the bound resulting from such analysis in the case of $\alpha = 1$ is tighter than the bound for an arbitrary $\alpha$, we have included the proof in the appendix.  However, when $\alpha \neq 1$, the identity \eqref{eq: GammaIdentity} is no longer applicable, so a new approach is needed to bound the tails of the series expansion.  We have the following bound on the truncation error.

%%%%%% OLD STATEMENTS OF THEOREM 5.1 %%%%%%%%%%%%
\begin{comment}
\begin{theorem}
Let $F_B'(z)$ be as in Theorem \ref{thm: series} and let $E(z) := F_B'(z) - 1$.  For any $\alpha$, $\beta$, and $B$, we have
\begin{equation} \label{eq: ZDependentBound}
|E(z)| \ \leq \ \frac{ e^{-\beta/B^z} \left( \frac{\beta}{B^z} \right)^{\alpha} }{\Gamma(\alpha)} \left( \Gamma(\alpha,B) +\Gamma(\alpha) -\Gamma(\alpha,1/B) \right). \\
\end{equation}
Furthermore, for any value of $z$ we have
\begin{equation}\label{eq: ZIndependentBound}
|E(z)| \ \leq \
\begin{cases}
\frac{1}{\Gamma(\alpha)} \left( \Gamma(\alpha,B) +\Gamma(\alpha) -\Gamma(\alpha,1/B) \right) e^{\beta} \beta^{\alpha} & \beta / \alpha < 1 \\
\frac{1}{\Gamma(\alpha)} \left( \Gamma(\alpha,B) +\Gamma(\alpha) -\Gamma(\alpha,1/B) \right) e^{\alpha} \alpha^{\alpha} & 1 \leq \beta / \alpha < B \\
\frac{1}{\Gamma(\alpha)} \left( \Gamma(\alpha,B) +\Gamma(\alpha) -\Gamma(\alpha,1/B) \right) e^{\beta / B} ( \beta / B)^{\alpha} & \beta / \alpha > B.
\end{cases}
\end{equation}
\end{theorem}

\begin{theorem}
%Let $F_B'(z)$ be as in Theorem \ref{thm: series} and let $E(z) := F_B'(z) - 1$.  For any $\alpha$, $\beta$, and $B$, we have:
%\begin{align}
%|E(z)| \ &\leq \ \frac{ e^{-\beta/B^z} \left( \frac{\beta}{B^z} \right)^{\alpha} }{\Gamma(\alpha)} \left( \Gamma(\alpha,B) +\Gamma(\alpha) -\Gamma(\alpha,1/B) \right) \label{eq: ZDependentBound} \\
%|E(z)| &\leq
%\begin{cases}
%\frac{1}{\Gamma(\alpha)} \left( \Gamma(\alpha,B) +\Gamma(\alpha) -\Gamma(\alpha,1/B) \right) e^{\beta} \beta^{\alpha} & \beta / \alpha < 1 \\
%\frac{1}{\Gamma(\alpha)} \left( \Gamma(\alpha,B) +\Gamma(\alpha) -\Gamma(\alpha,1/B) \right) e^{\alpha} \alpha^{\alpha} & 1 \leq \beta / \alpha < B \\
%\frac{1}{\Gamma(\alpha)} \left( \Gamma(\alpha,B) +\Gamma(\alpha) -\Gamma(\alpha,1/B) \right) e^{\beta / B} ( \beta / B)^{\alpha} & \beta / \alpha > B
%\end{cases} \label{eq: ZIndependentBound}
%\end{align}
%\end{theorem}

\end{comment}

%%%%%%% Adam, 1/20

\begin{theorem} \label{thm: TruncationError}
Let $F_B'(z)$ be as in (\ref{eq: HalfSimpDeriv}).  Let $E_M(z)$ denote the two-sided tail of the series expansion, i.e.,
\begin{equation}
E_M(z) \ := \ \frac{1}{\Gamma(\alpha)}\sum_{|k| \geq M} \exp \left( \frac{-2\pi ik\log{\frac{\beta}{B^{z}}}}{\log{B}} \right) \Gamma \left( \alpha+\frac{2\pi ik}{\log{B}} \right).
\end{equation}

\begin{enumerate}
\item \label{part: ZDependentBound}
We have
\begin{equation} \label{eq: ZDependentBound}
|E_M(z)| \ \leq \ \frac{B^{\alpha(1-z)}\beta}{\Gamma(\alpha)} \left( \int_{B^M}^{\infty} e^{-x}x^{\alpha -1}dx + \frac{1}{\alpha} B^{-M\alpha}  \right).
\end{equation}
\vspace{1mm}
\item \label{part: ZIndependentBound}
This is bounded uniformly on $z \in [0,1]$ by the constant
\begin{equation} \label{eq: ZIndependentBound}
|E_M(z)| \ \leq \ \frac{B^{\alpha}\beta}{\Gamma(\alpha)} \left( \int_{B^M}^{\infty} e^{-x}x^{\alpha -1}dx + \frac{1}{\alpha} B^{-M\alpha} \right).
\end{equation}
\vspace{1mm}
\vspace{1mm}
\item \label{part: EpsilonThreshold}
Furthermore, for any $\epsilon > 0$, in order to have $|E_M(z)| < \epsilon$ in (\ref{eq: ZIndependentBound}) it suffices to take
\begin{equation} \label{eq: EpsilonThreshold}
M \ > \ \max \left( \alpha+1, \ -\log_B \left(\frac{\epsilon \cdot \Gamma(\alpha)}{2B^{\alpha} \beta} \right) \right).
\end{equation}
\end{enumerate}
\end{theorem}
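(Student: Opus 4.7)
The plan is to bound $|E_M(z)|$ by analyzing each Fourier coefficient $\hat f(k):=\frac{1}{\Gamma(\alpha)}e^{-2\pi iku}\Gamma(\alpha+2\pi ik/\log B)$ (with $u:=\log_B\beta-z$) through the integral representation
\[
\hat f(k)\;=\;\frac{\log B}{\Gamma(\alpha)}\int_{-\infty}^{\infty}\left(\frac{\beta}{B^{z+s}}\right)^{\alpha}\exp\left(-\frac{\beta}{B^{z+s}}\right)e^{-2\pi iks}\,ds,
\]
which is essentially the calculation carried out in the middle of the proof of Theorem~\ref{thm: series}. The first step is to split this integral at the value $s^{*}=\log_{B}\beta-z-M$ (where $\beta/B^{z+s^{*}}=B^{M}$) and substitute $x=\beta/B^{z+s}$ in each piece, so the resulting ranges become $x\ge B^{M}$ and $x\le B^{M}$ respectively.

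On $x\ge B^{M}$ the modulus of the integrand is $x^{\alpha-1}e^{-x}$, giving a contribution controlled uniformly in $k$ by the upper incomplete gamma $\Gamma(\alpha,B^{M})/\Gamma(\alpha)$; this will account for the incomplete-gamma term in the bound of part~(1). On $x\le B^{M}$, the estimate $e^{-x}\le 1$ reduces the integrand to a pure power in $x$, and summing the resulting one-sided estimates over $|k|\ge M$ as a geometric series in $B^{-\alpha}$ produces the second summand $B^{-M\alpha}/\alpha$. The prefactor $B^{\alpha(1-z)}\beta/\Gamma(\alpha)$ then emerges from the Jacobian of the change of variables together with the inverse-gamma normalization.

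The main technical obstacle is the large-$x$ piece: its pointwise bound is $k$-independent and so naively summing over $|k|\ge M$ diverges. The resolution is to first interchange the sum with the integral and exploit the oscillation from $e^{-2\pi iks(x)}$ via an Abel-summation or Dirichlet-kernel argument, which produces the necessary cancellation and effectively relates the Poisson-side tail back to a tail of the original series~\eqref{eq: derivative}. Parts~(2) and~(3) then follow quickly from part~(1): for~(2), simply bound $B^{\alpha(1-z)}\le B^{\alpha}$ on $z\in[0,1]$; for~(3), require each summand in~\eqref{eq: ZIndependentBound} to be at most $\epsilon/2$. The hypothesis $M>\alpha+1$ places $B^{M}$ well past the peak of $x^{\alpha-1}e^{-x}$ at $x=\alpha-1$, so $\Gamma(\alpha,B^{M})$ is dominated by an $e^{-B^{M}}$ factor and is negligible, while $M>-\log_{B}\bigl(\epsilon\Gamma(\alpha)/(2B^{\alpha}\beta)\bigr)$ is precisely the inequality that forces the polynomial piece $B^{\alpha}\beta\cdot B^{-M\alpha}/(\alpha\Gamma(\alpha))$ below $\epsilon/2$.
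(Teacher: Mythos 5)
There is a genuine gap in your argument for part (1), and one of its stated estimates is simply incorrect. You split the integral representation of each Fourier coefficient at the fixed point $x=B^{M}$ and then take absolute values; but the split point does not depend on $k$, so after taking moduli all $k$-dependence is gone, and summing your bounds over the infinitely many $|k|\ge M$ diverges for \emph{both} pieces, not only the large-$x$ one. Worse, on the region $x\le B^{M}$ the bound $e^{-x}\le 1$ gives $\int_{0}^{B^{M}}x^{\alpha-1}\,dx=B^{M\alpha}/\alpha$, which is enormous: nothing in your set-up produces the factor $B^{-M\alpha}$, and no ``geometric series in $B^{-\alpha}$'' appears because your estimates do not depend on $k$ at all. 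The decay of $\Gamma(\alpha+2\pi ik/\log B)$ in $k$ comes entirely from the oscillation that absolute values discard, so the step you defer to ``an Abel-summation or Dirichlet-kernel argument'' is not a technical patch for one piece --- it is the whole content of the theorem. It is also delicate even to set up, since $\sum_{|k|\ge M}e^{-2\pi iks}$ does not converge pointwise; the interchange you propose has to be replaced by writing the tail as (full sum) minus (finite sum) and invoking Poisson summation for the full sum, which is exactly the route you would then have to carry out rather than cite.

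For comparison, the paper never estimates individual coefficients. It writes $\Gamma(\alpha+2\pi ib)$ as the Fourier transform of $s(u)=e^{-e^{-u}}e^{-\alpha u}$ at $b$, applies Poisson summation to the shifted and dilated $s$ (with $P=\log B$, $t=-\phi(z)$, $\phi(z)=\log(\beta/B^{z})$), and thereby controls the two-sided tail by the corresponding spatial sum $\frac{\log B}{\Gamma(\alpha)}\,e^{\alpha\phi(z)}\sum_{|k|\ge M}\exp\left(-\beta B^{-z}B^{-k}\right)B^{-\alpha k}$, whose terms are positive and decay in both directions: geometrically like $B^{-\alpha k}$ as $k\to+\infty$ and double-exponentially as $k\to-\infty$. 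Comparing these tails with integrals and substituting $x=\beta B^{-z-k}$ (after reducing to $1\le\beta<B$, $z\in[0,1]$) yields exactly the two terms of \eqref{eq: ZDependentBound}: the incomplete gamma $\int_{B^{M}}^{\infty}e^{-x}x^{\alpha-1}\,dx$ comes from $k\le -M$, while $\frac{1}{\alpha}B^{-M\alpha}=\int_{0}^{B^{-M}}x^{\alpha-1}\,dx$ comes from $k\ge M$ --- note the relevant small-$x$ cutoff is $B^{-M}$, not $B^{M}$. Your parts (2) and (3) follow the same reduction as the paper, but in (3) calling $\Gamma(\alpha,B^{M})$ ``negligible'' is not a bound: the paper uses $M>\alpha+1$ and $B\ge 3$ to get $e^{-x}x^{\alpha-1}\le x^{-2}$ for $x\ge B^{M}$, hence a tail integral at most $B^{-M}$, which is then combined with the second condition in \eqref{eq: EpsilonThreshold}; your sketch would need that same quantitative step.
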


\begin{proof}[Proof of part (\ref{part: ZDependentBound}): locally bounding the truncation error]
\par

We begin with (\ref{eq: HalfSimpDeriv}).

Let $\phi(z)=\log{\frac{\beta}{B^{z}}}$.
We have
\begin{equation}
E(z) \ := \ F_B'(z) - 1 \ = \ \frac{1}{\Gamma(\alpha)} \sum_{|k| \geq 1} \exp \left( -2\pi \frac{ik\phi(z)}{\log{B}} \right) \Gamma \left( \alpha + 2\pi \frac{ik}{\log{B}} \right).
\end{equation}
Furthermore, given $\Gamma(a+2\pi i b) =\int_{0}^{\infty}e^{-x}x^{a+2\pi i b-1}dx$, we may perform a change of variables and let %$x=e^{- k}$$$$$changed on 7/13/17%%%
 $x=e^{-u}$ so that we get
\begin{comment}
\begin{equation}
\Gamma(a+2\pi b i) \ = \ \int_{-\infty}^{\infty}e^{-e^{-k}}e^{- ak}e^{-2\pi ibk}dk \ = \ \mathcal{F} \left(   e^{-e^{- k}}e^{-2 ak} \right)(b)
\end{equation}
\end{comment}
\begin{equation}
\Gamma(a+2\pi b i) \ = \ \int_{-\infty}^{\infty}e^{-e^{-u}}e^{- au}e^{-2\pi ibu}du \ = \ \mathcal{F} \left(   e^{-e^{- u}}e^{- au} \right)(b),
\end{equation}
%%%%%%%%%%change made on 7/13/17%%%%%%%%%%
where $\mathcal{F}(\cdot)$ denotes the Fourier transform, as stated in \eqref{eq: altfourier}.  This transforms our sum into the sum of terms of the form %of the following Fourier transform.
\begin{comment}
\begin{align}
\nonumber &\exp \left( \frac{-2\pi ik\phi(z)}{\log{B}} \right) \Gamma \left( \alpha+2\pi \frac{it}{\log{B}} \right) \\
 &\ = \ \exp \left( \frac{-2\pi ik\phi(z)}{\log{B}} \right)\left[ \mathcal{F}  \left(  e^{-e^{- k}}e^{-\alpha k} \right) \left( \frac{t}{\log{B}} \right)\right].
\end{align}
\end{comment}
%%%%%%%%change made by Becky on 7/13/17%%%%%%%%%%%%%%%%%%%
\begin{comment}
\begin{align}
\nonumber &\exp \left( \frac{-2\pi ik\phi(z)}{\log{B}} \right) \Gamma \left( \alpha+2\pi \frac{ik}{\log{B}} \right) \\
 &\ = \ \exp \left( \frac{-2\pi ik\phi(z)}{\log{B}} \right)\left[ \mathcal{F}  \left(  e^{-e^{- k}}e^{-\alpha k} \right) \left( \frac{k}{\log{B}} \right)\right].
\end{align}
\end{comment}
\begin{align}
\nonumber &\exp \left( \frac{-2\pi ik\phi(z)}{\log{B}} \right) \Gamma \left( \alpha+2\pi \frac{ik}{\log{B}} \right) \\
 &\ = \ \exp \left( \frac{-2\pi ik\phi(z)}{\log{B}} \right)\left[ \mathcal{F}  \left(  e^{-e^{- u}}e^{-\alpha u} \right) \left( \frac{k}{\log{B}} \right)\right].
\end{align}
%%%%%%%change made on 7/13/17%%%%%%%%%
%Using the scaling and frequency shifting properties of Fourier transforms and the result of Theorem 4.2.8 in \cite{Pinsky}, we have the following equivalence for $P>0$:%change made on 1/21/16%
%%%%%change made on 8/2/17
Suppose $s\in L^{1}(\mathbb{R})$, $P>0$, and $t\in \mathbb{R}$. Define
\begin{equation}
g(x) \ \equiv \ s(Px+t).
\end{equation}
The scaling and frequency shift properties of Fourier transforms then yield
\begin{equation}
\hat{g}(\xi) \ = \ \frac{1}{P}\exp\bigg(\frac{2\pi i k t}{P}\bigg)\hat{s} \left( \frac{\xi}{P} \right).
\end{equation}
Thus, if $g$ meets the conditions required for Poisson summation, we have

\begin{comment}
\begin{equation}
\sum_{n \in \mathbb{Z}}s(t +nP) \ = \ \sum_{k\in \mathbb{Z}}\mathcal{F}(s) \left( \frac{k}{P} \right) e^{2\pi i \frac{k}{P}t}\frac{1}{P}.
\end{equation}
\end{comment}
\begin{equation}
P\sum_{n \in \mathbb{Z}}s(t +nP) \ = \ \sum_{k\in \mathbb{Z}}\exp\bigg( \frac{2\pi i k t}{P}\bigg)\mathcal{F}(s) \left( \frac{k}{P} \right).
\end{equation}
%%%%%%%%change made on 7/16/17%%%%%%%%%%%%%%%%
Therefore, letting $s=e^{-e^{-u}}e^{-\alpha u}$, $P=\log{B}$, and $t=-\phi(z)$, we have
\begin{comment}
\begin{align}
\nonumber E(z) \ &= \ \frac{\log{B}}{\Gamma(\alpha)} \sum_{|k| \geq 1} e^{-e^{-(-\phi(z)+k\log{B})}}e^{-\alpha(-\phi(z)+k\log{B})} \\
\ &= \ \frac{\log{B} \left( e^{-e^{\phi(z)}}e^{\alpha \phi(z)}\right)}{\Gamma(\alpha)} \sum_{|k| \geq 1} e^{-e^{-k\log{B}}}e^{-\alpha k\log{B}} .
\end{align}
\end{comment}
\begin{align}
\nonumber E(z) \ &= \frac{1}{\Gamma(\alpha)} \sum_{|k| \geq 1}\mathcal{F}(s) \left( \frac{k}{P} \right) e^{2\pi i \frac{k}{P}t}\frac{1}{P} \\
\nonumber &= \ \left(\frac{1}{\Gamma(\alpha)} \sum_{k \in \mathbb{Z}}\mathcal{F}(s) \left( \frac{k}{P} \right) e^{2\pi i \frac{k}{P}t}\frac{1}{P}\right) \ - \ \frac{1}{\Gamma(\alpha)}  \\
\nonumber &\leq \frac{P}{\Gamma(\alpha)} \sum_{k\in \mathbb{Z}} s(t+kP) \\
\nonumber &= \ \frac{P}{\Gamma(\alpha)} \sum_{k \in \mathbb{Z}} \exp \left( -e^{\phi(z)} e^{-k \log B} \right) e^{\alpha \phi(z)} e^{-\alpha k \log B} \\
\ &= \ \frac{P}{\Gamma(\alpha)} \sum_{k \in \mathbb{Z}} \exp \left( - \beta B^{-z} B^{-k} \right) e^{\alpha \phi(z)} e^{-\alpha k \log B}.
%
%
%
%\ &= \ \frac{\log{B} \left( e^{-e^{\phi(z)}}e^{\alpha \phi(z)}\right)}{\Gamma(\alpha)} \sum_{k\in \mathbb{Z}} e^{-e^{-k\log{B}}}e^{-\alpha k\log{B}} .
\end{align}
Recall that we are only working in the range $1 \leq \beta < B$, $0 \leq z  \leq 1$.  Thus for each $z$ we have $\beta B^{-z}B^{-k} \geq \beta B^{-1} B^{-k} = \beta B^{-k-1}$.  Thus the equation above reduces to
\begin{align}
E(z) \ &\leq \ \frac{P}{\Gamma(\alpha)} \sum_{k \in \mathbb{Z}} \exp \left( - \beta B^{-z} B^{-k} \right) e^{\alpha \phi(z)} e^{-\alpha k \log B} \\
&\leq \ \frac{P}{\Gamma(\alpha)} \sum_{k \in \mathbb{Z}} \exp \left( - \beta B^{-k-1} \right) e^{\alpha \phi(z)} e^{-\alpha k \log B} \\
&= \ \frac{(\log B) e^{\alpha \phi(z)}}{\Gamma(\alpha)} \sum_{k \in \mathbb{Z}} \exp \left( -\beta e^{-(k+1)\log B} \right) e^{-\alpha k \log B}.
\end{align}

%%%%%%%%%%%%%%%%%%%%%%%%%%%%%%%Change made by Becky on 7/13/17%%%%%%%%%%%%%%%
We now concentrate on the truncation error $E_M(z)$, given by
\begin{comment}
\begin{equation}
E_M(z) \ = \ \frac{\log{B} \left( e^{-e^{\phi(z)}}e^{\alpha \phi(z)}\right)}{\Gamma(\alpha)} \sum_{|k| \geq M} e^{-e^{-k\log{B}}}e^{-\alpha k\log{B}}.
\end{equation}
\end{comment}
\begin{equation}
E_M(z) \ \leq \ \frac{(\log{B}) e^{\alpha \phi(z)} }{\Gamma(\alpha)} \sum_{|k| \leq M} \exp \left( -\beta e^{-(k+1) \log B} \right) e^{-\alpha k\log{B}}.
\end{equation}
%%%%%%changed on 7/13/17%%%%%%%
We bound our sums by integrals and perform a change of variables, letting $x=e^{-(k+1)\log{B}}$ and $dx = -(\log B) e^{-(k+1)\log{B}} dk$.  This yields
\begin{align}
\nonumber |E_M(z)| \ &\leq \ \frac{e^{\alpha \phi(z)}}{\Gamma(\alpha)} \left( \int_{B^M}^{\infty} e^{-x}x^{\alpha -1} e^{\alpha \log B}dx +\int_{0}^{B^{-M}} e^{-x}x^{\alpha -1} e^{\alpha \log B} dx \right) \\
\nonumber &\leq \ \frac{ e^{\alpha \log B}e^{\alpha \phi(z)}}{\Gamma(\alpha)} \left( \int_{B^M}^{\infty} e^{-x}x^{\alpha -1}dx +\int_{0}^{B^{-M}} x^{\alpha -1}dx \right) \\
 &\leq \ \frac{B^{\alpha(1-z)}\beta}{\Gamma(\alpha)} \left( \int_{B^M}^{\infty} e^{-x}x^{\alpha -1}dx + \frac{1}{\alpha} B^{-M\alpha}  \right),
\end{align}
which is (\ref{eq: ZDependentBound}), thus proving (\ref{part: ZDependentBound}).
\\
\\
%%%Chi fixed the wording in "globally bounding" to match the theorem statement (01/20)

\textit{Proof of part (\ref{part: ZIndependentBound}): uniformly bounding the truncation error for $z \in [0,1]$.}%globally bounding the truncation error.}
To get (\ref{eq: ZIndependentBound}), we simply maximize (\ref{eq: ZDependentBound}) with respect to $z$.  Set
\begin{equation}
g(z) \ = \ B^{\alpha(1-z)},
\end{equation}
and note that
\begin{equation} \label{eq: EDerivative}
g'(z) \ = \ B^{\alpha} B^{-\alpha z} (-\alpha) \log B,
\end{equation}
which is negative for $z \in [0,1]$.  Hence $g$ is decreasing on $z \in [0,1]$, so $g$ is maximized at $z=0$, yielding
\begin{equation}
|E_M(z)| \ \leq \ \frac{B^{\alpha}\beta}{\Gamma(\alpha)} \left( \int_{B^M}^{\infty} e^{-x}x^{\alpha -1}dx + \frac{1}{\alpha} B^{-M\alpha} \right).
\end{equation}
This proves (\ref{part: ZIndependentBound}). \\

%and solve to get $z = \log_B \left( \frac{\beta}{\alpha} \right)$.  Also note that $g'(z)$ is monotonically decreasing, so $g(z)$ has exactly one maximum at $z = \log_B \left( \frac{\beta}{\alpha} \right)$.  Recalling that we only consider $|E_M(z)|$ on $z \in [0,1]$, we conclude that if $\log_B \left( \frac{\beta}{\alpha} \right) \leq 0$, $|E_M(z)|$ is maximized at $z=0$, if $\log_B \left( \frac{\beta}{\alpha} \right) \in (0,1)$, $|E_M(z)|$ is maximized at $z = \log_B \left( \frac{\beta}{\alpha} \right)$, and if $\log_B \left( \frac{\beta}{\alpha} \right) \geq 1$, then $|E_M(z)|$ is maximized at $z=1$.  Calculating the value of (\ref{eq: ZDependentBound}) at these three points and letting $C(\alpha, \beta, B)$ be their maximum yields (\ref{eq: ZIndependentBound}), so part (\ref{part: ZIndependentBound}) is proven. \\
\textit{Proof of part (\ref{part: EpsilonThreshold})}.  Fix an $\epsilon > 0$ and suppose
\begin{equation} \label{eq: MBound}
M \ > \ \max \left( \alpha+1, \ -\log_B \left(\frac{\epsilon \cdot \Gamma(\alpha)}{2B^{\alpha} \beta} \right) \right).
\end{equation} In particular, because $B \geq 3$ this implies that $B^M > e^{\alpha+1}$.  Since $x/\log x$ is an increasing function and $B^M/\log(B^M) > e^{\alpha+1}/(\alpha+1) > \alpha+1$, this shows that $x / \log x > \alpha + 1$ for all $x \geq B^M$, which implies that
\begin{equation} \label{eq: ConditionOne}
e^{-x} x^{\alpha - 1} \ \leq \ 1/x^2.
\end{equation}
Equation (\ref{eq: MBound}) also implies that
\begin{equation} \label{eq: ConditionTwo}
\frac{1}{\alpha} B^{-M \alpha} + B^{-M} \ < \ 2B^{-M} \ < \ \frac{\epsilon \cdot \Gamma(\alpha)}{B^{\alpha} \beta}.
\end{equation}
Combining (\ref{eq: ConditionOne}) and (\ref{eq: ConditionTwo}) with (\ref{eq: ZIndependentBound}), we have the bound
\begin{align}
|E_M(z)| \ &< \ \frac{B^{\alpha} \beta}{\Gamma(\alpha)} \left( \frac{1}{\alpha} B^{-M \alpha} + \int_{B^M}^{\infty} \frac{1}{x^2} dx \right) \nonumber \\
&< \ \frac{B^{\alpha} \beta}{\Gamma(\alpha)} \left( \frac{1}{\alpha} B^{-M \alpha} + B^{-M} \right) \nonumber \\
&< \ \frac{B^{\alpha} \beta}{\Gamma(\alpha)} \cdot \frac{\epsilon \cdot \Gamma(\alpha)}{B^{\alpha} \beta} \ = \ \epsilon.
\end{align}
\end{proof}

%%%%%%%%%%%%%%%%%%%%%%%%%%%%%%%%%%%%%%%%%%%%%%%%%%%%%%%%%%%%%%%%%%%%%%%%%%%%%%%%%%%%%%%%%%%%%%%%%%%%%%%%%%%%%%%%%%%%%%%%
%%%%%%%%%%%%%%%%%%%%%%%%%%%%%%%%%%%%%%%%%%%%%%%%%%%%%%%%%%%%%%%%%%%%%%%%%%%%%%%%%%%%%%%%%%%%%%%%%%%%%%%%%%%%%%%%%%%%%%%%
%%%%%%%%%%%%%%%%%%%%%%%%%%%%%%%%%%%%%%%%%%%%%%%%%%%%%%%%%%%%%%%%%%%%%%%%%%%%%%%%%%%%%%%%%%%%%%%%%%%%%%%%%%%%%%%%%%%%%%%%
\section{Plots and analysis}
\label{sec: Plots}

%%%%%% Adam, 1/20

Using Theorem \ref{thm: TruncationError} allows us to easily compare $F_B(z)$, the CDF of $\log X_{\alpha, \beta}$, with $z$, the Benford CDF.  We simply integrate (\ref{eq: SimpDeriv}) from $0$ to $z$, yielding
\begin{equation} \label{eq: newCDF}
F_B(z) \ = \ z + \frac{1}{\Gamma(\alpha)} \sum_{|k| \geq 1} \Gamma \left( \alpha + \frac{2 \pi i k}{\log B} \right) \frac{1}{2 \pi i k} e^{ -2 \pi i k \log_B(\beta)} \left( e^{ 2 \pi i k z} - 1 \right).
\end{equation}
We now use Theorem \ref{thm: TruncationError} in the following way.  Fix an $\epsilon > 0$.  Then part \eqref{part: EpsilonThreshold} of Theorem \ref{thm: TruncationError} allows us to quickly compute the value of $|F_B'(z) - 1|$ to within $\epsilon$ of the true value.  Thus, after integrating, since we are only working on $z \in [0,1]$, the mean value theorem guarantees that we now know $|F_B(z) - z|$ to within $\epsilon$ of the true value.  In short, Theorem \ref{thm: TruncationError} allows us to obtain very good estimates for $|F_B(z) - z|$ by taking only the first few terms of the sum in \eqref{eq: newCDF}, which makes calculating the deviation more computationally feasible.  To measure the closeness to Benford of the distribution, we use the quantity
\begin{equation}
\max_{z \in [0,1]} |F_B(z) - z|.
\end{equation}
In Figure \ref{fig: ContourPlotsMaxDeviation}, we illustrate this quantity as a function of $\alpha$ and $\beta$ with $B=10$ fixed.  In Figures \ref{fig: FBexamples} and \ref{fig: FBexamples2} we show examples of the graph of $F_B(z)$ for different values of $\alpha$ and $\beta$.  The emergent trend is that as $\alpha$ increases, the distribution gets farther away from Benford, and the Benfordness is largely independent of $\beta$.  This behavior is similar to that of the Weibull distribution exhibited in \cite{Weibull}.

%%%%%%%%%%%%%%%%%%%%% L2 NORM STUFF %%%%%%%%%%%%%%%%%%%%
\begin{comment}
As another measure of closeness to Benford, we analyze the $L^2$ norm
\begin{equation} \label{eq: LNorm}
||F_B(z) - z ||_2 \ = \ \left( \int_0^1 | F_B(z) - z |^2 \ dz \right)^{1/2}.
\end{equation}
Contour plots of this quantity with respect to $\alpha$ and $\beta$ with $B=10$ fixed are shown in Figure \ref{fig: PlotsLNorm}.
\end{comment}
%%%%%%%%%%%%%%%%%%%%%%%%%%%%%%%%%%%%%%%%%%%%%%%%%%%%%%%

\begin{figure}[h]
\begin{subfigure}{.4\textwidth}
\includegraphics[scale=.7]{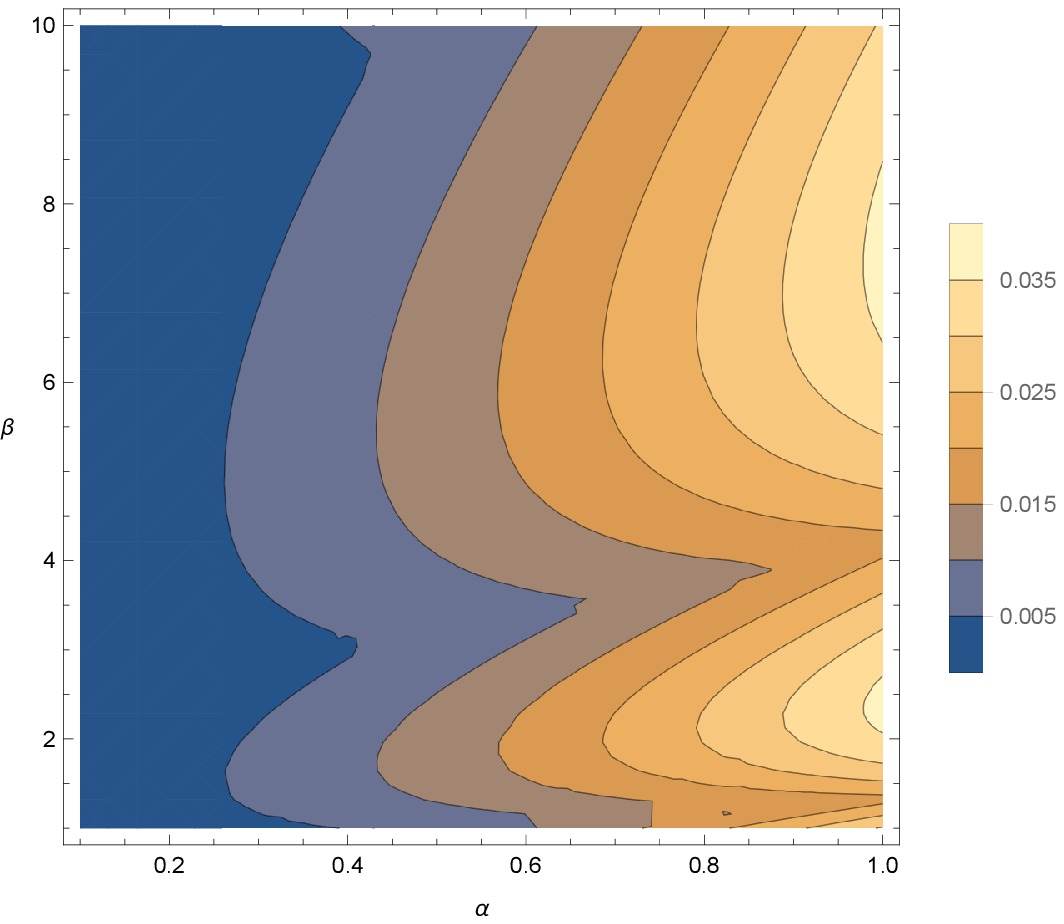}
\caption{$\alpha \in [0.1,1]$}
\end{subfigure}
\hspace{15mm}
\begin{subfigure}{.4\textwidth}
\includegraphics[scale=.7]{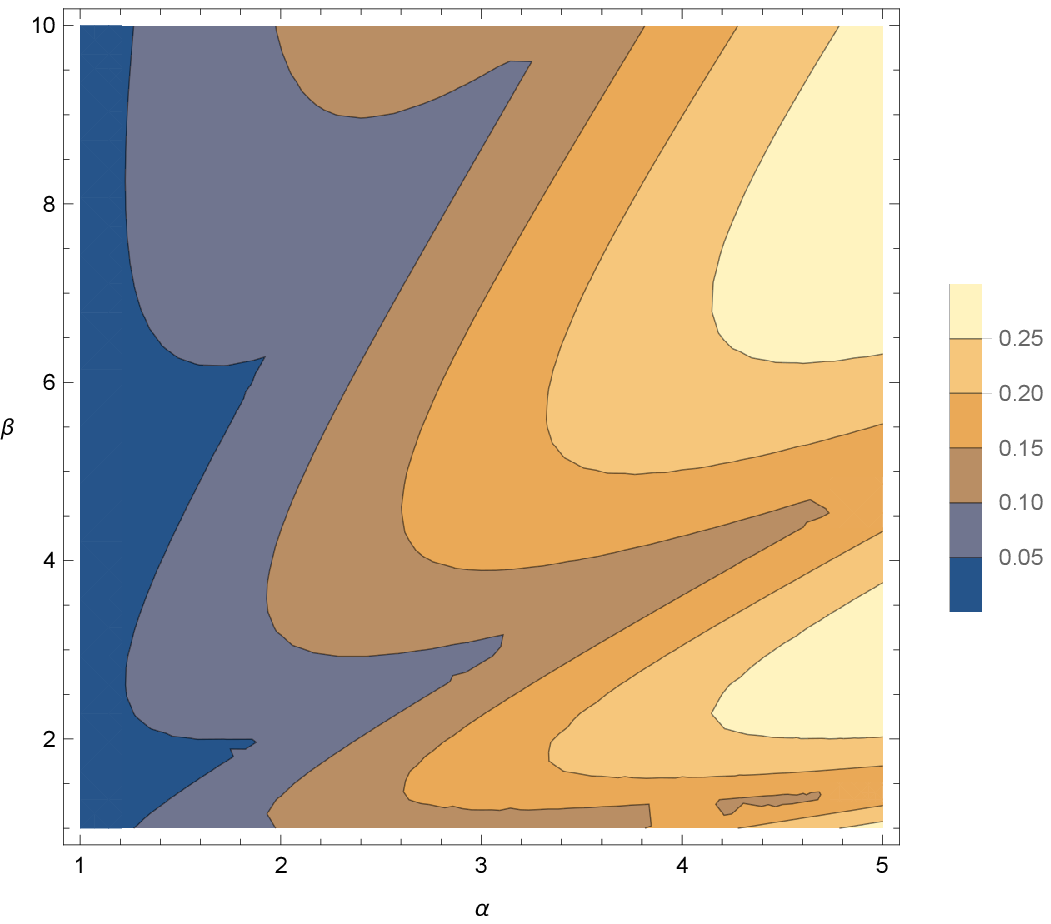}
\caption{$\alpha \in [1,5]$}
\end{subfigure}
\\
\vspace{5mm}
\begin{subfigure}{.4\textwidth}
\bc
\includegraphics[scale=.7]{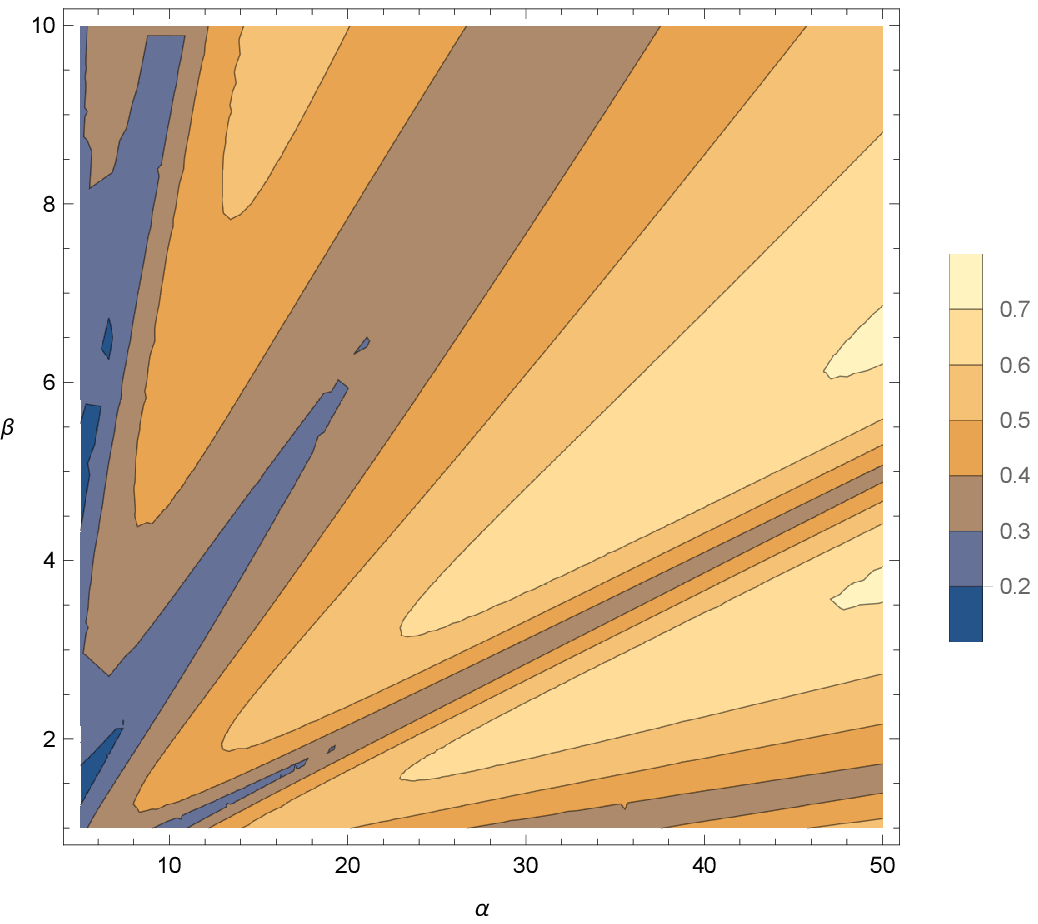}
\caption{$\alpha \in [5,50]$}
\ec
\end{subfigure}
\caption{Contour plots of the quantity $\max_{z \in [0,1]} |F_B(z) - z|$ (see \eqref{eq: newCDF}) as a function of $\alpha$ and $\beta$ with $B=10$ fixed.  Using part \eqref{part: EpsilonThreshold} of Theorem \ref{thm: TruncationError}, we have made the displayed values accurate to within $\epsilon = 0.001$.  Notice that the error is large for large $\alpha$, meaning that the inverse gamma distribution only approximates Benford behavior for small $\alpha$.  Also notice that $\beta$ has less of an effect on the error.}
\label{fig: ContourPlotsMaxDeviation}
\end{figure}

\begin{figure}[h]
\begin{subfigure}{.4\textwidth}
\includegraphics[scale=.7]{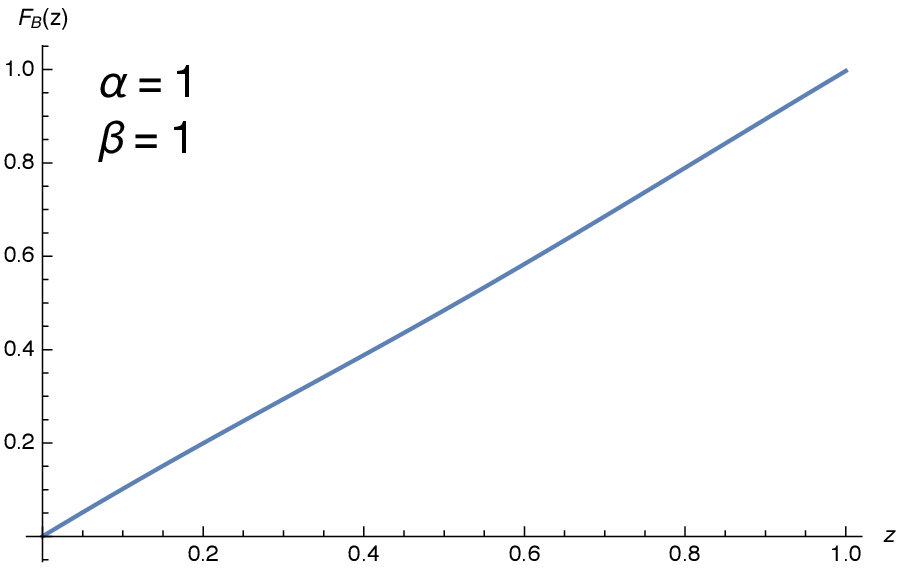}
\caption{$\alpha=1$, $\beta = 1$}
\end{subfigure}
\hspace{15mm}
\begin{subfigure}{.4\textwidth}
\includegraphics[scale=.7]{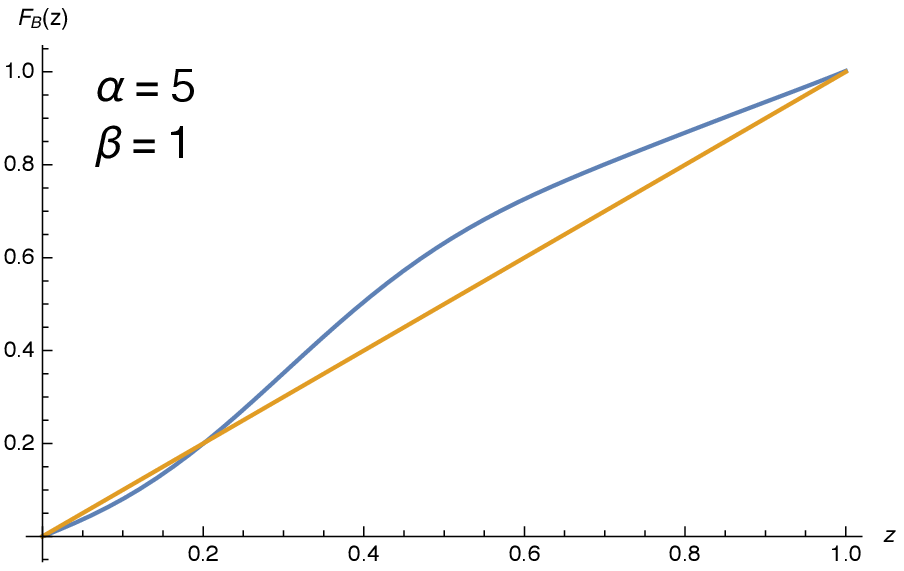}
\caption{$\alpha=5$, $\beta = 1$}
\end{subfigure}
\\
\begin{subfigure}{.4\textwidth}
\includegraphics[scale=.7]{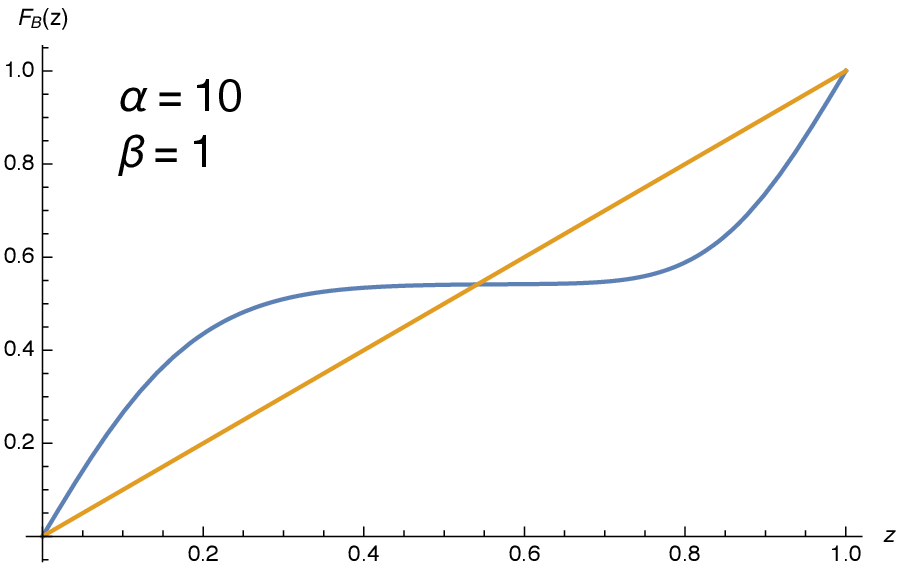}
\caption{$\alpha=10$, $\beta = 1$}
\end{subfigure}
\hspace{15mm}
\begin{subfigure}{.4\textwidth}
\includegraphics[scale=.7]{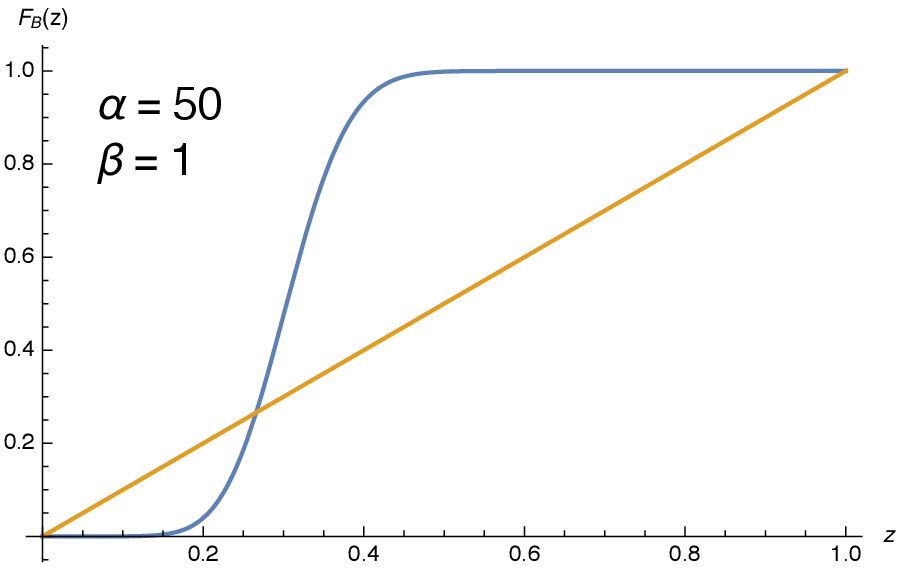}
\caption{$\alpha=50$, $\beta = 1$}
\end{subfigure}

\caption{The plots of $F_B(z)$ for given values of $\alpha$ and $\beta$ are in blue.  The function $z \mapsto z$ is plotted in orange for comparison.  Notice that as $\alpha$ increases, the approximation of $F_B(z)$ by $z$ gets worse.}
\label{fig: FBexamples}
\end{figure}

\begin{figure}[h]
\begin{subfigure}{.4\textwidth}
\includegraphics[scale=.7]{alpha10beta1.eps}
\caption{$\alpha=10$, $\beta = 1$}
\end{subfigure}
\hspace{15mm}
\begin{subfigure}{.4\textwidth}
\includegraphics[scale=.7]{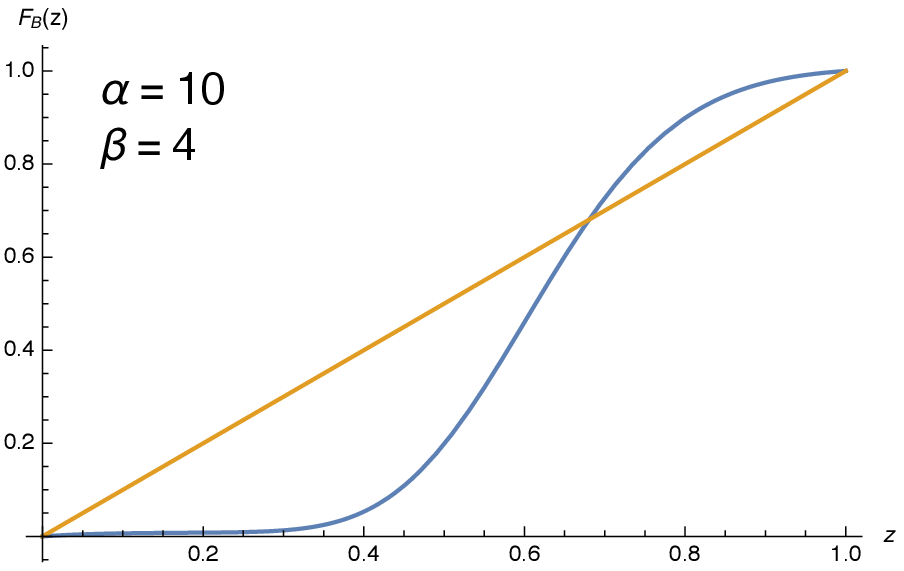}
\caption{$\alpha=10$, $\beta = 4$}
\end{subfigure}
\\
\begin{subfigure}{.4\textwidth}
\bc
\includegraphics[scale=.7]{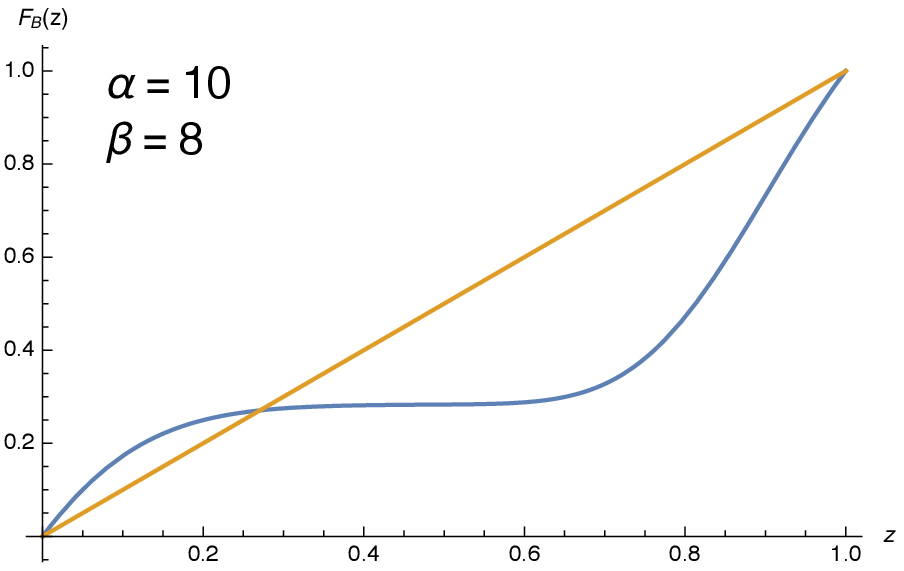}
\caption{$\alpha=10$, $\beta = 8$}
\ec
\end{subfigure}

\caption{For a fixed $\alpha$, note that as $\beta$ increases, the shape of $F_B(z)$ changes, but the maximum deviation from $z$ remains approximately the same.}
\label{fig: FBexamples2}
\end{figure}

%%%%%%%%%%%%%%%%%% L2 NORM STUFF %%%%%%%%%%%%%%%%%%%%%%%%%%
\begin{comment}
\begin{figure}[h]
\begin{subfigure}{.4\textwidth}
\includegraphics[scale=.7]{LNormContourPlotLowRangeAlpha}
\caption{$\alpha \in [0.1,1]$}
\end{subfigure}
\hspace{20mm}
\begin{subfigure}{.4\textwidth}
\includegraphics[scale=.7]{LNormContourPlotMidRangeAlpha}
\caption{$\alpha \in [1,5]$}
\end{subfigure}
\\
\vspace{5mm}
\begin{subfigure}{.4\textwidth}
\bc
\includegraphics[scale=.7]{LNormContourPlotHighRangeAlpha}
\caption{$\alpha \in [5,50]$}
\ec
\end{subfigure}
\caption{Contour plots of the $L^2$ norm (\ref{eq: LNorm}).  Here we also see that the error increases with $\alpha$, while $\beta$ has less of an effect.} \label{fig: PlotsLNorm}
\end{figure}
\end{comment}
%%%%%%%%%%%%%%%%%%%%%%%%%%%%%%%%%%%%%%%%%%%%%%%%%%%%%%%

\appendix

%%%%%%%%%%%%%%%%%%%%%%%%%%%%%%%%%%%%%%%%%%%%%%%%%%%%%%%%%%%%%%%%%%%%%%%%%%%%%%%%%%%%%%%%%%%%%%%%%%%%%%%%%%%%%%%%%%%%%%%%
%%%%%%%%%%%%%%%%%%%%%%%%%%%%%%%%%%%%%%%%%%%%%%%%%%%%%%%%%%%%%%%%%%%%%%%%%%%%%%%%%%%%%%%%%%%%%%%%%%%%%%%%%%%%%%%%%%%%%%%%
%%%%%%%%%%%%%%%%%%%%%%%%%%%%%%%%%%%%%%%%%%%%%%%%%%%%%%%%%%%%%%%%%%%%%%%%%%%%%%%%%%%%%%%%%%%%%%%%%%%%%%%%%%%%%%%%%%%%%%%%
\section{Bounding the truncation error in the special case $\alpha = 1$} \label{app: bound1}
As mentioned above, when $\alpha = 1$ it is possible for us to achieve better bounds on the truncation error using methods similar to those in \cite{Weibull}.
\begin{theorem}
Let $F'_B(z)$ be as in Theorem \ref{thm: series} with $\alpha = 1$.
\begin{enumerate}
\item For $M \geq \frac{\log{2}\log{B}}{4\pi^2}$, the contribution to $F'_B(z)$ from the tail of the expansion (from the terms with $k \geq M$ in (\ref{eq: SimpDeriv}))  is at most
\begin{equation}
\frac{4(\pi ^2 + \log{B})}{\pi \sqrt{\log{B}}} M \exp \left( \frac{-\pi^2 M}{\log{B}} \right).
\end{equation}
\item For an error of at most $\epsilon$ from ignoring the terms with $k \geq M$ in (\ref{eq: SimpDeriv}), it suffices to take
\begin{equation}
M \ = \ \frac{h+\log{h} + 1/2}{a}
\end{equation}
where $a = \frac{\pi^2}{\log{B}}$, $h = \max \left(6, -\log{\frac{a\epsilon}{C}} \right)$, and $C = \frac{4(\pi^2 + \log{B})}{\pi\log{B}}$.
\end{enumerate}
\end{theorem}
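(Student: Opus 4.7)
For part (1), the plan is to bound $|E_M(z)|$ by passing absolute values inside the sum in \eqref{eq: SimpDeriv} specialized to $\alpha = 1$. Since $\Gamma(1) = 1$ and $|e^{2\pi ik(\log_B\beta - z)}| = 1$, the problem reduces to estimating $\sum_{k \geq M} |\Gamma(1 - 2\pi ik/\log B)|$. The analytic key is the identity \eqref{eq: GammaIdentity}, which with $x = 2\pi k/\log B$ converts each gamma factor into $\sqrt{(2\pi^2 k/\log B)/\sinh(2\pi^2 k/\log B)}$, a real expression built from elementary functions whose size can be pinned down precisely.

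Next I would weaken the hyperbolic sine. Writing $u = \exp(2\pi^2 k/\log B)$, the elementary inequality $1/(u - u^{-1}) \leq 2/u$ is equivalent to $u \geq \sqrt{2}$, which in turn is equivalent to $k \geq (\log 2)(\log B)/(4\pi^2)$; this is the sole role of the hypothesis on $M$, and it guarantees the clean bound $1/\sinh(y) \leq 2e^{-y}$ uniformly on the tail. Dominating the resulting sum by the integral $\int_M^\infty m \exp(-\pi^2 m/\log B)\,dm$ and performing a single integration by parts (with $a := \pi^2/\log B$) produces $(aM+1)e^{-aM}/a^2 \leq (a+1)Me^{-aM}/a^2$. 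Reassembling the constants then yields the claimed bound $\frac{4(\pi^2+\log B)}{\pi\sqrt{\log B}}\,M\exp(-\pi^2 M/\log B)$.

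For part (2), I would invert the relation $CMe^{-aM} \leq \epsilon$ furnished by part (1). The substitution $v := aM$ turns the inequality into $ve^{-v} \leq a\epsilon/C$, and setting $h := -\log(a\epsilon/C)$ puts the right-hand side in the form $e^{-h}$. I would try the ansatz $v = h + \log h + \tfrac12$, which reduces the task to checking $h + \log h + \tfrac12 \leq he^{1/2}$. The cleanest route is to first establish $\log h \leq h/2$ for $h \geq 6$ (this follows from $e^h \geq h^3/6 \geq h^2$), whence $h + \log h + \tfrac12 \leq 19h/12 \approx 1.588h$, which sits below $he^{1/2} \approx 1.649h$. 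Replacing $h$ by $\max(6, -\log(a\epsilon/C))$ forces the required inequality $\log h \leq h/2$ to hold uniformly in $\epsilon$, yielding the stated formula for $M$.

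The main obstacle is careful bookkeeping rather than any new ideas: the factors of $\sqrt{2}$, $\pi$, and $\log B$ produced by the $\sinh$ simplification, the integral comparison, and the integration by parts must all be tracked so they reassemble into the exact coefficient $4(\pi^2 + \log B)/(\pi\sqrt{\log B})$. The threshold $M \geq (\log 2)(\log B)/(4\pi^2)$ is not a cosmetic assumption but is precisely the crossover point at which $\sinh(\pi x)$ admits the exponential lower bound that drives the whole estimate; dropping this hypothesis would leave the bound inflated by the $u^{-1}$ term in $u - u^{-1}$.
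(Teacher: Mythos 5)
Your proposal reproduces the paper's appendix argument essentially step for step: the same modulus identity $|\Gamma(1+ix)|^2 = \pi x/\sinh(\pi x)$, the same $u \geq \sqrt{2}$ threshold explaining the hypothesis $M \geq \frac{\log 2 \log B}{4\pi^2}$, the same integral comparison and integration by parts for part (1), and for part (2) the same substitutions $v = aM$, $h = -\log\frac{a\epsilon}{C}$, the ansatz $v = h + \log h + \tfrac{1}{2}$, and the $h \geq 6$ check via $e^h \geq h^3/6$ with the comparison $19/12 < e^{1/2}$. The only (harmless) bookkeeping difference is that your intermediate bound $(a+1)Me^{-aM}/a^2$ carries an extra factor $1/a$ compared with the paper's $(a+1)Me^{-aM}/a$, which is what reassembles exactly into $\frac{4(\pi^2+\log B)}{\pi\sqrt{\log B}}Me^{-aM}$; since $a = \pi^2/\log B \geq 1$ for any base of practical size, your expression is in fact smaller and the stated bound follows.
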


\begin{proof}
\text{ } \\
\begin{enumerate}
\item
As stated, we estimate the contribution to $F'_B(z)$ from the tail when $\alpha = 1$.  Let
\begin{equation} \label{eq: Mtail}
E_M(z) \ := \ \frac{2}{\Gamma(1)} \sum_{k = M}^{\infty} \Re \left( e^{2\pi ik(\log_{B}{\beta-z})} \Gamma \left( 1 + \frac{-2\pi i k}{\log{B}} \right) \right)
\end{equation}
where $\Gamma(1+iu) = \int_{0}^{\infty} e^{-x} x^{iu} dx$ with $u = \frac{-2\pi i k}{\log{B}}$ in our case.  We note that as $u$ increases, there is more oscillation, which means the integral would achieve a smaller value when $u$ increases.  Since $|e^{i\theta}| = 1$, when we take the absolute values inside the sum we get $|e^{2\pi ik(\log_{B}{\beta-z})}| = 1$.  Thus it is safe to ignore this term in computing the upper bound.

Using the fact that $|\Gamma(1+ix)|^2 = \frac{\pi x}{\sinh (\pi x)}$, we have from (\ref{eq: Mtail}):

\begin{align}\label{eq:ReducedMTail}
\nonumber |E_M(z)| &\ \leq \ \frac{2}{\Gamma(1)} \sum_{k = M}^{\infty} \left| e^{2\pi ik(\log_{B}{\beta-z})} \right| \left| \Gamma \left( 1 + \frac{-2\pi i k}{\log{B}} \right) \right| \\
\nonumber &\ \leq \ \frac{2\sqrt{2}\pi}{\sqrt{\log{B}}} \sum_{k=M}^{\infty} \sqrt{\frac{k}{\sinh \left( \frac{2\pi^2k}{\log{B}} \right) }} \\
\nonumber &\ = \ \frac{2\sqrt{2}\pi}{\sqrt{\log{B}}} \sum_{k=M}^{\infty} \sqrt{\frac{2k^2}{\exp \left( \frac{2\pi^2k}{\log{B}} \right) - \exp \left( \frac{-2\pi^2k}{\log{B}} \right) }} \\
&\ \leq \ \frac{4\pi}{\sqrt{\log{B}}} \sum_{k=M}^{\infty} \sqrt{k^2 / \exp\left(\frac{2\pi^2k}{\log B}\right)}.
\end{align}
Here we have overestimated the error by disregarding the difference in the denominator, which is very small when $k$ is big. Let $u = \exp \left( \frac{2\pi^2k}{\log{B}} \right)$.  For $\frac{1}{u-{1/u}} < \frac{2}{u}$, we must get $u \geq \sqrt{2}$, which means $\exp \left( \frac{2\pi^2k}{\log{B}} \right) \geq \sqrt{2}$.  Solving this gives us $k \geq \frac{\log{2}\log{B}}{4\pi^2}$, which will help us simplify the denominator as we can assume $M$ exceeds this value and $k \geq M$.  We can now substitute this bound into (\ref{eq:ReducedMTail}) to simplify further:
\begin{align}
\nonumber |E_M(z)| &\ \leq \ \frac{4\pi}{\sqrt{\log{B}}}  \sum_{k=M}^{\infty} \frac{\sqrt{2}k}{\exp \left( \frac{\pi^2k}{\log{B}} \right)} \\
&\ \leq \ \frac{4\pi}{\sqrt{\log{B}}} \int_{M}^{\infty} m \exp \left( \frac{-\pi^2m}{\log{B}} \right) dm.
\end{align}
We let $a = \frac{\pi^2}{\log{B}}$ and apply integration by parts to get
\begin{align}
\nonumber|E_M(z)| &\ \leq \ \frac{4\pi}{\sqrt{\log{B}}} \frac{1}{a^2}\left( aMe^{-aM} + e^{-aM} \right) \\
\nonumber &\ \leq \ \frac{4\pi}{\sqrt{\log{B}}} \frac{a+1}{a}Me^{-aM} \\
&\ = \ \frac{4\pi(a+1)}{a\sqrt{\log{B}}} Me^{-aM},
\end{align}
which simplifies to
\begin{equation}
|E_M(z)| \ \leq \ \frac{4(\pi^2 + \log{B})}{\pi \sqrt{\log{B}}} M \exp \left( \frac{-\pi^2M}{\log{B}} \right),
\end{equation}
proving part (1). \\
\\
\item
Let $C = \frac{4(\pi^2 + \log{B})}{\pi\log{B}}$ and $a = \frac{\pi^2}{\log{B}}$ as before.  We want
\begin{equation}
CMe^{-aM} \ \leq \ \epsilon.
\end{equation}
We will do this by iteratively expanding to improve the bounds.
Let $v = aM$, then
\begin{equation}
\frac{C}{a}ve^{-v} \ \leq \ \epsilon \Longleftrightarrow ve^{-v} \ \leq \ \frac{a\epsilon}{C}.
\end{equation}
We carry out a change of variables one more time, letting $h = -\log{\frac{a\epsilon}{C}}$ and expanding $v$ as $v = h + x$. This leads to
\begin{align} \label{eq: expansion}
\nonumber &ve^{-v} \ \leq e^{-h}\ \\
&\longleftrightarrow\frac{h+x}{e^x} \ \leq \ 1.
\end{align}
Now we note that by expanding $v$ in this way, solving for $x$ is equivalent to solving for $v$ , which is equivalent to solving for $M$. We guess $x = \log{h} + \frac{1}{2}$ then the left-hand-side of \ref{eq: expansion} becomes:
\begin{equation}
\frac{h + \log{h} + 1/2}{he^{1/2}} \ \leq \ 1 \leftrightarrow h + \log{h} + 1/2 \ \leq \ he^{1/2}.
\end{equation}
Now what we want to do is to determine the value of $h$ so that $\log{h} \leq h/2$ since this ensures the inequality above would hold. The aforementioned inequality gives $h \leq e^{h/2}$ or $h^2 \leq e^h$.  Since for $h$ positive, $e^h \geq \frac{h^3}{3!}$, it is sufficient to choose $h$ such that $h^2 \leq h^3/6$ or $h \geq 6$.  For $h \geq 6$,
\begin{equation}
h + \log{h} + \frac{1}{2} \ \leq \ h + \frac{h}{12} + \frac{h}{2} \ = \ \frac{19h}{12} \ \approx \ 1.5883h.
\end{equation}
As $he^{1/2} \approx 1.64872h$, a sufficient cutoff for $M$ in terms of $h$ for an error of at most $\epsilon$ is
\begin{equation}
M \ = \ \frac{h+\log{h} + 1/2}{a}
\end{equation}
with $a = \frac{\pi^2}{\log{B}}$, $h = \max \left( 6, -\log{\frac{a\epsilon}{C}} \right)$.

\end{enumerate}
\end{proof}

\ \\

\end{document}